\newcommand{\red}[1]{{\color{red}#1}}
\newtheorem{theorem}{Theorem}[section]
\newtheorem{lemma}[theorem]{Lemma}
\newtheorem{proposition}[theorem]{Proposition}
\newtheorem{corollary}[theorem]{Corollary}
\newtheorem{fact}[theorem]{Fact}
\newtheorem{example}[theorem]{Example}
\newtheorem{problem}[theorem]{Problem}
\newtheorem{remark}[theorem]{Remark}
\newcommand{\on}{\operatorname}
\newcommand{\N}{\mathbb N}
\newcommand{\Q}{\mathbb Q}
\newcommand{\R}{\mathbb R}
\newcommand{\Z}{\mathbb Z}
\newcommand{\Co}{\mathfrak c}
\newcommand{\ve}{\varepsilon}
\newcommand{\res}{\upharpoonright}
\newcommand{\la}{\langle}
\newcommand{\ra}{\rangle}
\newcommand{\mc}{\mathcal}
\newcommand{\Perf}{\on{Perf}}
\newcommand{\osc}{\on{osc}}
\newcommand{\cl}{\on{cl}}
\newcommand{\HCRP}{\mathrm{HCRP}}
\newcommand{\HSOP}{\mathrm{HSOP}}
\newcommand{\diam}{\on{diam}}
\newcommand{\card}{\on{card}}
\renewcommand{\int}{\on{int}}
\subjclass[2010]{91A44, 91A05, 54A20, 26A21}
\keywords{point-set games, equivalence of games, hereditary continuous restriction property, hereditary small oscillation property}
\author{Marek Balcerzak}
\address{Institute of Mathematics,
         Lodz University of Technology, al. Politechniki 8,
         93-590 \L\'od\'z,
         Poland}
         \email{marek.balcerzak@p.lodz.pl}
\author{Tomasz Natkaniec}
\address{Institute of Mathematics, Faculty of Mathematics, Physics and Informatics,
University of Gda\'{n}sk, 80-308 Gda\'{n}sk, Poland}
\email{tomasz.natkaniec@ug.edu.pl}
\author{Piotr Szuca}
\address{Institute of Mathematics, Faculty of Mathematics, Physics and Informatics,
University of Gda\'{n}sk, 80-308 Gda\'{n}sk, Poland}
\email{piotr.szuca@ug.edu.pl}
 \title{Point-set games and functions with the hereditary small oscillation property}
\begin{document}
 \begin{abstract}
 Given a metric space $X$, we consider certain families of functions $f\colon X\to\R$ having the hereditary oscillation property HSOP and the hereditary continuous restriction property HCRP on large sets. When $X$ is Polish, among them there are families of Baire measurable functions, $\overline{\mu}$-measurable functions (for a finite nonatomic Borel measure $\mu$ on $X$) and Marczewski measurable functions. We obtain their characterizations using a class of equivalent point-set games.  In similar aspects, we study cliquish functions, SZ-functions and countably continuous functions.
 \end{abstract}
 \maketitle
\section{Introduction}
In topology several kinds of games between two players have various important applications. They can characterize certain classes of sets or functions in a transparent way. For some survey articles, see \cite{Aurichi,Cao,Gary}.
In our paper we present characterizations of several natural families of real-valued functions defined on a metric space $X$, via point-set games. We continue ideas initiated in \cite{BNS} where point-set games were used to characterize Baire measurable functions on Polish spaces and Lebesgue measurable functions on $\R^k$. The motivation of \cite{BNS} came from the article of Kiss \cite{Kiss} who obtained a new game characterization of Baire~1 functions. Here our approach is more general. We distinguish classes of functions having the hereditary continuous restriction property HCRP and the hereditary small oscillation property HSOP with respect to various kinds of large sets. Two types of point-set games are useful: one of them is $G_1(\Sigma,f)$ that was considered in \cite{BNS} in particular cases, and  the other game $G_{<\omega}(\Sigma, f)$ is a slight modification of $G_1(\Sigma, f)$. 
These are examples of the so-called {\it selection games}, which  are well-represented in the literature; see e.g. \cite{Scheepers}. In particular, a general versions of $G_1(\Sigma,f)$ and $G_{<\omega}(\Sigma, f)$  are considered by Clontz in the recent paper \cite{Clontz}.
We show that these two games are equivalent and, in several settings, determined. However, we leave an open question whether they are determined in general.

The paper is organized as follows. Section~2 is devoted to the definitions of games $G_1(\Sigma,f)$ and $G_{<\omega}(\Sigma, f)$ and their relationships. In particular, we show that these games are equivalent. In Section~3 we introduce properties HCSP and HSOP and we prove the basic Theorem \ref{TW} which describes connections between HCRP and HSOP (for $f\colon X\to\R$ with respect to a family $\Sigma$) and the existence of winning strategies of Players~I and II in the games $G_1(\Sigma, f)$ (so also in  $G_{<\omega}(\Sigma, f)$). In Section~4 we show applications of our results to some kinds of measurable functions having the HSOP with respect to the corresponding families $\Sigma$ of large sets. Other interrelations with the topic are discussed in Sections~5 and~6. This part of our paper is concerned with cliquish functions, SZ-functions and countably continuous functions.

\section{A scheme of games}
Let $X$ be a metric space.
Assume that $\Sigma$ is a non-empty family of non-empty subsets of $X$ and $f\colon X\to\R$ is fixed. 
We will define two point-set games $G_{<\omega}(\Sigma,f)$ and $G_1(\Sigma,f)$. Note that $G_1(\Sigma,f)$ was studied in \cite{BNS}.

The game $G_{<\omega}(\Sigma,f)$ is defined as follows. 
At the initial step of $G_{<\omega}(\Sigma,f)$, Player~I plays ${P}\in\Sigma$, 
then Player II plays a set ${P}_0$.
At the $n$th step, $n\ge 1$,
Player I plays a finite sequence $\langle x_i\colon k_{n-1}<i\le k_n\rangle$
(where $k_0=0$ and $k_n$ is a strictly increasing sequence of naturals),
and Player II plays a ${P}_n$: 
$$\begin{array}{lllllllll}
	\textrm{Player I}  & P &      & x_1,\ldots, x_{k_1}&      & x_{k_1+1},\ldots, x_{k_2} &      & \cdots &        \\
	\textrm{Player II} &     & P_0 &     & P_1 &     & P_2 &        & \cdots
\end{array}$$
with the rules that for each integer $n\ge 0$:
\begin{itemize}
	\item  $x_{i}\in P_n$ for $i=k_n+1,\ldots, k_{n+1}$;
	\item $P_n\in\Sigma$ and $P_n\subseteq {P}$.
\end{itemize}
Player II wins the game $G_{<\infty}(\Sigma,f)$ if 
$\la x_n\ra $ is convergent and $\lim_{n\to\infty} f(x_n)=f(\lim_{n\to\infty}x_n)$. Otherwise, Player~I wins.

The game $G_1(\Sigma,f)$ is similar to $G_{<\omega}(\Sigma,f)$ with one difference: for every $n>0$ we assume that $k_n=n$, i.e. we have
$$\begin{array}{lllllllll}
	\textrm{Player I}  & P &      & x_1 &  & x_2    &      & \cdots        \\
	\textrm{Player II} &     & P_0 &     & P_1 &     & P_2 &     & \cdots
\end{array}$$

%\red{
\begin{remark}
Note that the finite sequence played by Player~I in $G_{<\omega}$ is non-empty.
This distinguishes this game from the game $G_{<\omega}$ defined in~\cite{Clontz}.
\end{remark}
%}

\begin{remark}\label{remark}
	If Player I  has a winning strategy in the game $G_1(\Sigma,f)$ then he has also a winning strategy in the game $G_{<\omega}(\Sigma,f)$.	
	If Player II  has a winning strategy in the game $G_{<\omega}(\Sigma,f)$, then he has also a winning strategy in the game $G_1(\Sigma,f)$.	
\end{remark}

\begin{proposition} \label{propi}
	Assume that $\Sigma$ is a non-empty family of non-empty subsets of a metric space $(X,d)$ and $f\in\R^X$. If Player I  has a winning strategy in the game $G_{<\omega}(\Sigma,f)$, then he has also a winning strategy in the game $G_1(\Sigma,f)$.
\end{proposition}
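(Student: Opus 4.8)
The plan is to start from a winning strategy $\sigma$ for Player~I in $G_{<\omega}(\Sigma,f)$ and to manufacture a winning strategy $\tau$ for Player~I in $G_1(\Sigma,f)$ by running, in the background, a single shadow play of $G_{<\omega}(\Sigma,f)$ that is consistent with $\sigma$ and whose Player~II moves are read off from the $G_1$-play as it unfolds. Since both games open identically, I would let $\tau$ reproduce the opening move of $\sigma$, namely $P=\sigma(\emptyset)$, and when Player~II answers with a set $P_0$ I record this as Player~II's first move in the shadow play. From this point on the two plays are meant to share one and the same point sequence $\la x_n\ra$; the only thing $\tau$ has to decide at each of Player~I's turns in $G_1$ is which single point to put down.

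The mechanism is the following bookkeeping. Whenever the shadow play is consistent with $\sigma$ and has just reached a Player~II move $P_n$, the strategy $\sigma$ prescribes a finite non-empty block with all of its points in $P_n$, and Player~I transfers these points into the real $G_1$-play one at a time. After each single point Player~II answers with a set; Player~I continues laying down the points of the current $\sigma$-block as long as the $G_1$-rules allow, and once the block has been exhausted he adopts the set most recently played by Player~II as the next shadow move $P_{n+1}$ and consults $\sigma$ for the following block, which then lies inside $P_{n+1}$. The aim of this bookkeeping is to guarantee that the sequence $\la x_n\ra$ actually realised in $G_1$ is, verbatim, the point sequence of a $\sigma$-consistent play of $G_{<\omega}$; once that is secured, the fact that $\sigma$ is winning forces $\la x_n\ra$ either to diverge or to converge with $\lim_n f(x_n)\neq f(\lim_n x_n)$, so Player~I wins in $G_1$ as well.

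The step I expect to be the real obstacle is the clause \emph{``as long as the $G_1$-rules allow''}. In $G_{<\omega}$ all the points of a block are only required to lie in the single set $P_n$, whereas in $G_1$ the point $x_{i+1}$ must lie in the set that Player~II plays immediately after $x_i$, and that set need only be a subset of $P$, not of $P_n$. Thus Player~II may refine so as to expel the next point of the current $\sigma$-block, after which Player~I cannot simply continue the block, and if he instead records the refinement and reopens a new block he risks shortening the block $\sigma$ actually prescribed, i.e.\ deviating from $\sigma$. The heart of the proof is to organise the correspondence so that such a refinement is harmless: one must show that the single-point sequence genuinely realised in $G_1$ can always be presented as the \emph{full} point sequence of some $\sigma$-consistent play of $G_{<\omega}$, rather than merely as a subsequence of one, since a subsequence of a divergent or $f$-discontinuous sequence need not be divergent or $f$-discontinuous. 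Verifying that the re-segmentation forced by Player~II's finer refinements never breaks consistency with $\sigma$—so that the winning condition of $\sigma$ transfers to the sequence actually produced in $G_1$—is the crux, and it is precisely here that the finiteness of the blocks and the extra information Player~I gains from seeing a fresh set after every single move must be exploited.
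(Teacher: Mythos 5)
Your proposal stops exactly where the real work has to begin: you name the crux (Player~II's interleaved refinements can expel the next point of the current $\sigma$-block) but give no mechanism for resolving it, and the resolution you aim at is in fact unattainable. You want the sequence realised in $G_1$ to be, \emph{verbatim}, the full point sequence of some $\sigma$-consistent play of $G_{<\omega}$. But in $G_1$ Player~II answers after every single point, and nothing stops him from responding to $x_i$ with a set that omits every remaining point of the current block (e.g.\ when $\Sigma$ is dense he can shrink to a small set around $x_i$); the point Player~I is then forced to play is not the next point of the block, and since the blocks output by $\sigma$ are beyond Player~I's control, there is in general no choice of imaginary shadow moves for Player~II under which the realised sequence re-segments into exact $\sigma$-blocks. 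So the deferred step is not a verification but the missing idea itself, and in the form you propose it the argument would fail.

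The paper's proof abandons verbatim transfer of the sequence and instead transfers the \emph{convergence property}, via an extremal selection. The shadow play advances one full round per $G_1$ round, reusing Player~II's actual sets $P_1,P_2,\dots$ as shadow moves; this is legal because Player~II's constraints are identical in the two games ($P_n\in\Sigma$, $P_n\subseteq P$, with no nesting requirement), so no re-segmentation issue ever arises. From the block $\langle a_1,\dots,a_m\rangle$ prescribed by the $G_{<\omega}$-strategy after $P_1,\dots,P_n$, Player~I plays in $G_1$ the single point $a_{j_n}$ maximizing $\bar{d}(b,a_j)$ over the block, where $b$ is his previously played point and $\bar{d}(x,x'):=d(x,x')+|f(x)-f(x')|$; this is legal since $a_{j_n}$ lies in the block, hence in $P_{n-1}$. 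The extremal choice is what makes the winning condition transfer: if the realised $G_1$ sequence converged to $x$ with $f(x_n)\to f(x)$, then every point of the $m$-th block lies within $\bar{d}(x_m,x_{m-1})$ of $x_{m-1}$, and the triangle inequality forces all block points eventually into any $\varepsilon$-ball around $x$ in the $\bar{d}$-sense; thus the whole shadow sequence would converge with matching $f$-values, contradicting that the $G_{<\omega}$-strategy is winning. In short, the subsequence problem you correctly flag is solved not by promoting the subsequence to a full shadow sequence, but by choosing it so extremally that its good convergence forces good convergence of the entire shadow sequence.
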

\begin{proof}
Let $\$_{<\omega}$ be a winning strategy of Player I in the game $G_{<\omega}(\Sigma,f)$.
We will define a strategy $\$_1$ for Player I in the game $G_1(\Sigma,f)$
and we will show that this strategy is winning.

Fix $n\in\mathbb{N}$ and suppose that $\$_{<\omega}(P_1,P_2,\ldots,P_n)=\la a_1,a_2,\ldots, a_m\ra$, and we have defined $\$_1(P_1,P_2,\ldots,P_k)$ for all $k<n$.
We put $b:=\$_1(P_1,P_2,\ldots,P_{n-1})$ and let
$j_n\le m$ be such that $$\bar{d}(b,a_{j_n})=\max\{ \bar{d}(b,a_j)\colon 1\le j\le m\},$$
where $\bar{d}(x,x'):=d(x,x')+|f(x)-f(x')|$.
Then we define $$\$_1(P_1,\ldots,P_n):= a_{j_n}.$$

To show that $\$_1$ is a winning strategy, suppose to the contrary that there exists
a sequence $\langle P_n\rangle_{n\in\mathbb{N}}$ of sets from $\Sigma$
such that, if $x_n:=\$_1(P_1,P_2,\ldots,P_n)$ for $n\in\N$, then
\begin{equation}\label{prop:win-I:1}
    \lim_{n\to\infty}x_n = x \textup{\ and\ }
    \lim_{n\to\infty}f(x_n) = f(x).
\end{equation}
Since $\$_{<\omega}$ is a winning strategy for Player I in $G_{<\omega}(\Sigma,f)$,
we can fix $\varepsilon>0$ such that, if 
$$\la x'_{k_{n-1}+1},\ldots,x'_{k_n}\ra:=\$_{<\omega}(P_1,P_2,\ldots,P_n),$$
then for each $N\in\mathbb{N}$ there exist $n>N$ and $j\in\{ k_{n-1}+1,\ldots,k_n\}$ such that 
\begin{equation}\label{prop:win-I:2}
\bar{d}(x'_{j}, x)>\varepsilon.
\end{equation}
By (\ref{prop:win-I:1}) we can fix $N_0\in\mathbb{N}$ such that
\begin{equation}\label{prop:win-I:3}
\forall_{n\ge N_0}\;  
\bar{d}(x_n,x)<\frac{\varepsilon}{3}.
\end{equation}
Then, by (\ref{prop:win-I:2}) we can find $m>N_0$ and $j\in\{ k_{m-1}+1,\ldots,k_m\}$ such that
\begin{equation}\label{prop:win-I:4}
\bar{d}(x'_{j},x)>\varepsilon .
\end{equation}
Without loss of generality we can assume that 
\begin{equation}\label{prop:win-I:5}
x_m=x'_{k_{m-1}+1}
\end{equation}
since other cases are analogous.  
Then,  by the formula defining $\$_1$ we obtain
\begin{equation}\label{prop:win-I:6}
\bar{d}(x'_{j},x_{m-1})\le \bar{d}(x_m,x_{m-1}) \textup{ for all\ }j\in\{k_{m-1}+1,\ldots,k_m\},
\end{equation}
and consequently,
\begin{align*}
\frac{\varepsilon}{3} & > \underbrace{\bar{d}(x_m,x)}_{\textup{by (\ref{prop:win-I:3})}}=\underbrace{\bar{d}(x'_{k_{m-1}+1},x)}_{\textup{by (\ref{prop:win-I:5})}}
  \geq \underbrace{\bar{d}(x'_{j},x_{m-1})}_{\textup{by (\ref{prop:win-I:6})}} - \bar{d}(x, x_{m-1})  \geq \\
 & \geq \bar{d}( x'_{j}, x_{m-1}) - \underbrace{\varepsilon/3}_{\textup{by (\ref{prop:win-I:3})}}
  \geq \bar{d}({x'_{j}},x) - \bar{d}( x_{m-1},x) - \varepsilon/3
 > \underbrace{\varepsilon}_{\textup{by (\ref{prop:win-I:4})}} - \underbrace{\varepsilon/3}_{\textup{by (\ref{prop:win-I:3})}} - \varepsilon/3 = \frac{\varepsilon}{3}, 
\end{align*}
which is a desired contradiction.
\end{proof}

 %%%%%%%%%%%%%%%%%%%%%%%%%%%%%%%%%%%%%%%%%%%%%%%%%%%%%%%%%%%%
 \begin{proposition} \label{propii}
	Assume that $\Sigma$ is a non-empty family of non-empty subsets of a metric space $(X,d)$ and $f\in\R^X$. If Player II  has a winning strategy in the game $G_1(\Sigma,f)$, then he has also a winning strategy in the game $G_{<\omega}(\Sigma,f)$.
\end{proposition}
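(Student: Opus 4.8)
The plan is to turn a winning strategy $\tau$ for Player~II in $G_1(\Sigma,f)$ into a winning strategy for Player~II in $G_{<\omega}(\Sigma,f)$ by running a single simulated copy of $G_1$ in which each block played by Player~I is represented by one cleverly chosen point. First I would have Player~II answer the opening move $P$ by $P_0:=\tau(P)$. Inductively, after Player~I plays the $n$th block $\la x_{k_{n-1}+1},\dots,x_{k_n}\ra$ (which by the rules lies entirely in the previously chosen set $P_{n-1}$), Player~II selects a representative point $y_n$ of this block and answers with $P_n:=\tau(P,y_1,\dots,y_n)$. For $n\ge 2$ the representative is chosen, exactly as in Proposition~\ref{propi}, to be a point of the block maximizing $\bar{d}(\cdot,y_{n-1})$, where $\bar{d}(x,x')=d(x,x')+|f(x)-f(x')|$; for $n=1$ any point of the (non-empty) first block will do.

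The first thing to verify is that this prescription is legal and produces a genuine $\tau$-consistent play of $G_1$. Each $P_n$ has the form $\tau(\dots)$, hence lies in $\Sigma$ and is contained in $P$, so it is a legal answer in $G_{<\omega}$. Moreover, because the $(n{+}1)$st block is required to lie in $P_n$, the representative satisfies $y_{n+1}\in P_n=\tau(P,y_1,\dots,y_n)$; thus the sequence $\la y_n\ra$ together with the answers $\la P_n\ra$ is precisely a legal run of $G_1$ consistent with $\tau$. Since $\tau$ is winning for Player~II, $\la y_n\ra$ converges to some $y$ with $\lim_n f(y_n)=f(y)$, equivalently $\bar{d}(y_n,y)\to 0$.

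It remains to upgrade convergence of the representatives to convergence of the full sequence $\la x_j\ra$, and this is where the maximality in the choice of $y_n$ does the work. Fix a block index $n\ge 2$ and an arbitrary point $x_j$ of the $n$th block. By the choice of $y_n$ we have $\bar{d}(x_j,y_{n-1})\le \bar{d}(y_n,y_{n-1})$, so the triangle inequality for $\bar{d}$ gives
\[
\bar{d}(x_j,y)\le \bar{d}(x_j,y_{n-1})+\bar{d}(y_{n-1},y)\le \bar{d}(y_n,y_{n-1})+\bar{d}(y_{n-1},y)\le \bar{d}(y_n,y)+2\,\bar{d}(y_{n-1},y).
\]
As $j\to\infty$ the block index $n(j)$ of $x_j$ tends to infinity, and the right-hand side (with $n=n(j)$) tends to $0$ because $\bar{d}(y_m,y)\to 0$. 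Hence $\bar{d}(x_j,y)\to 0$, that is $x_j\to y$ and $f(x_j)\to f(y)$, so Player~II wins this run of $G_{<\omega}(\Sigma,f)$.

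The step I expect to be the only real obstacle is precisely the one just handled: a direct simulation that feeds every point of a block into $\tau$ fails, since after the first point of a block $\tau$ may shrink its answer to a set no longer containing the remaining block points, so those points need not be legal $G_1$ moves. Representing each block by a single point sidesteps the legality issue, because the whole block sits inside the previous answer $P_{n-1}$, and choosing that point to be $\bar{d}$-farthest from the previous representative is exactly what forces the remaining, non-represented points of the block to inherit the convergence. The minor bookkeeping — non-emptiness of blocks guaranteeing that $y_n$ exists, the harmlessness of the arbitrary choice of $y_1$ since only the tail of $\la x_j\ra$ matters, and the fact that $\bar{d}$ is a genuine metric so that the triangle inequality applies — I would dispatch in passing.
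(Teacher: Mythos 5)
Your proof is correct and follows essentially the same route as the paper's: Player~II simulates $G_1$ by representing each block with the point that maximizes $\bar{d}$-distance from the previous representative, then transfers convergence of the representatives to the whole sequence via the triangle inequality for $\bar{d}$. The paper packages that final transfer as a proof by contradiction with an $\varepsilon/3$-argument, whereas you give the equivalent direct estimate $\bar{d}(x_j,y)\le \bar{d}(y_n,y)+2\bar{d}(y_{n-1},y)$, but the underlying idea is identical.
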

\begin{proof}
	Let $\$_1$ be a winning strategy of Player II in the game $G_1(\Sigma,f)$. %Fix $m\in\mathbb{N}$.
	We will define the winning strategy $\$_{<\omega}$ for Player II in the game $G_{<\omega}(\Sigma,f)$.
Let  $\$_{<\omega}(P):=\$_1(P)$. Assume that $n>0$, $P\in\Sigma$, $\{x_1,\ldots,x_{k_n}\}\subseteq X$ and  
$\$_1(P,x_1,\ldots,x_{k_{m-1}}, x_{k_{m-1}+1},\ldots, x_{k_m})$ is defined for all $m<n$. Let $k_0:=1$ and $\bar{x}_0:=x_1$. Then
for every $1\le i\le n$ pick an integer $t$ with $k_{i-1}<t\le k_i$ such that
$$\bar{d}(\bar{x}_{i-1},x_{k_{i-1}+t})=\max\{ \bar{d}(\bar{x}_{i-1},x_{j})\colon k_{i-1}<j\le k_i\},$$
and define $\bar{x}_i:= x_{k_{i-1}+t}$. Then let
$$\$_{<\omega}(P,x_1,\ldots,x_{k_n}):=\$_1(P,\bar{x}_1,\ldots,\bar{x}_n).$$ 

We will verify that $\$_{<\omega}$ is a winning strategy for Player II in the game $G_{<\omega}(\Sigma,f)$.
Suppose to the contrary that there exists a sequence $\la P,x_1,x_2,\ldots,x_{k_n-1},x_{k_n}\ldots\ra$ such that Player I wins the game $G_{<\omega}(\Sigma,f)$ in which he plays $\la x_{k_{n-1}+1},\ldots, x_{k_n}	\ra$ in the $n$th move, and Player II responds with $P_n:=\$_{<\omega}(P,x_1,\ldots,x_{k_n})$. 
Since $\$_1$ is a winning strategy for Player II in the game $G_1(\Sigma,f)$, there exists 
$x\in X$ such that $\lim_n\bar{x}_n=x$ and $\lim_nf(\bar{x}_n)=f(x)$.
But Player I wins the game $G_{<\omega}(\Sigma,f)$ with the moves $\la x_{k_{n-1}+1},\ldots, x_{k_n}\ra$ for $n\in\mathbb{N}$, so either
the sequence $\la x_n\ra$ does not converge to $x$, or $\la f(x_n)\ra$ does not converge to $f(x)$. Therefore, we can fix $\varepsilon>0$ such that for each $N\in\mathbb{N}$ there exists $n>N$ such that 
$$\bar{d}(x_n,x)>\varepsilon.$$
Since $\lim_n\bar{x}_n=x$ and $\lim_nf(\bar{x}_n)=f(x)$, there is $N_0\in\mathbb{N}$ such that for all $n\ge N_0$ we have  
$$\bar{d}(\bar{x}_n,x)<\frac{\varepsilon}{3}.$$ 
Fix $n,m>N_0$ with $\bar{d}(x_n,x)>\varepsilon$ and  $k_{m-1}<n\le k_m$. 
Without loss of generality, we may assume that $\bar{x}_m=x_{k_{m-1}+1}$. Then 
$n=k_{m-1}+j$ for some $1<j\le k_m-k_{m-1}$, and 
we have 
\begin{align*}
\varepsilon< & \bar{d}(x_n,x)= \bar{d}(x_{k_{m-1}+j},x)\le\bar{d}(x_{k_{m-1}+j},\bar{x}_{m-1})+\bar{d}(\bar{x}_{m-1},x) \\
& \le\bar{d}(\bar{x}_m,\bar{x}_{m-1})+\frac{\varepsilon}{3} 
\le \bar{d}(\bar{x}_m,x)+\bar{d}(x,\bar{x}_{m-1})+\frac{\varepsilon}{3} <\varepsilon,
\end{align*}
a contradiction.
\end{proof}

We say that two games~1 and~2 are {\em equivalent} whenever each of players has a winning strategy in 
the game~1 if and only if he has a winning strategy in the game~2.
Thus Propositions \ref{propi} and \ref{propii} yield the following corollary.

\begin{corollary}
Assume that $\Sigma$ is a non-empty family of non-empty subsets of a metric space $(X,d)$ and $f\in\R^X$. Then  the games $G_1(\Sigma,f)$ and $G_{<\omega}(\Sigma,f)$ are equivalent.    
\end{corollary}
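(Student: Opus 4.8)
The plan is to assemble the corollary from the four one-directional implications already at our disposal, two for each player. Recall that, under the definition of equivalence just given, we must show separately for Player~I and for Player~II that possession of a winning strategy in $G_1(\Sigma,f)$ is equivalent to possession of a winning strategy in $G_{<\omega}(\Sigma,f)$. So I would organize the argument as two biconditionals, one per player, and then simply record that their conjunction is exactly the asserted equivalence.

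For Player~I, the first half of Remark~\ref{remark} supplies the implication that a winning strategy in $G_1(\Sigma,f)$ yields one in $G_{<\omega}(\Sigma,f)$, while Proposition~\ref{propi} supplies the converse, namely that a winning strategy in $G_{<\omega}(\Sigma,f)$ yields one in $G_1(\Sigma,f)$. Chaining these gives that Player~I has a winning strategy in $G_1(\Sigma,f)$ if and only if he has one in $G_{<\omega}(\Sigma,f)$. For Player~II, the second half of Remark~\ref{remark} gives that a winning strategy in $G_{<\omega}(\Sigma,f)$ yields one in $G_1(\Sigma,f)$, and Proposition~\ref{propii} gives the reverse direction; hence Player~II has a winning strategy in $G_1(\Sigma,f)$ if and only if he has one in $G_{<\omega}(\Sigma,f)$.

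Putting the two biconditionals together is precisely the statement that the games $G_1(\Sigma,f)$ and $G_{<\omega}(\Sigma,f)$ are equivalent, and the proof is complete. Since all the substantive work has already been discharged in the two propositions—whose arguments carry out the delicate book-keeping of translating between single-point moves and finite-block moves by selecting, via the auxiliary metric $\bar d(x,x')=d(x,x')+|f(x)-f(x')|$, the "farthest" point of each block—there is no genuine obstacle remaining at this stage. The only thing to be careful about is completeness of the case analysis: one must invoke both halves of Remark~\ref{remark} together with both propositions, so that each player is covered in both directions rather than stopping after a single implication for a single player.
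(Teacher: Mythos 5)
Your proposal is correct and follows exactly the paper's own route: the paper also obtains the corollary by combining the two easy implications of Remark~2.2 with Propositions~2.3 and~2.4, giving one biconditional per player. Nothing further is needed.
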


We can also consider the following modifications of the games $G_1(\Sigma,f)$ and $G_{<\omega}(\sigma,f)$. 
Let $\lambda$ be an increasing sequence of natural numbers. 
The definition of the game $G_\lambda(\Sigma,f)$ is the same as the definition of $G_{<\omega}(\Sigma,f)$ with the condition: $k_n=\lambda(n)$ for every $n\in\mathbb{N}$. Moreover, if $\lambda(n)=mn$, the game $G_{\lambda}(\Sigma,f)$ is denoted by $G_m(\Sigma,f)$, i.e. 
at the $n$th step of the game $G_m(\Sigma,f)$, 
where $n\ge 1$,
Player~I plays a finite sequence $\langle x_i\colon m(n-1)<i\le mn\rangle$ and Player~II plays a set ${P}_n$  as follows: 
$$\begin{array}{lllllllll}
	\textrm{Player I}  & P &      & x_1,\ldots, x_{m}&      & x_{m+1},\ldots, x_{2m} &      & \cdots &        \\
	\textrm{Player II} &     & P_0 &     & P_1 &     & P_2 &        & \cdots
\end{array}$$
It is easy to observe that:
\begin{itemize}
    \item 
    If Player I  has a winning strategy in the game $G_1(\Sigma,f)$ then he has also a winning strategy in the game $G_{\lambda}(\Sigma,f)$, for any increasing sequence $\lambda$.	
    \item
	If Player II  has a winning strategy in the game $G_{<\omega}(\Sigma,f)$, then he has also a winning strategy in the game $G_\lambda(\Sigma,f)$ for every $\lambda$.
\end{itemize}
Therefore, all games $G_\lambda(\Sigma,f)$ are equivalent to the game $G_1(\Sigma,f)$ (so also to the game $G_{<\omega}(\Sigma,f)$).

In the next sections we give several examples where the games $G_1(\Sigma, f)$ (so, also $G_{<\omega}(\Sigma,f))$  are determined for functions $f$ from special classes of functions with the respectively chosen $\Sigma$.
The following question seems to be interesting in this context.
\begin{problem}
	Do there exist a family $\Sigma$ of non-empty subsets of $\R$ and a function $f\colon\R\to\R$ for which the game $G_1(\Sigma,f)$ is not determined?
\end{problem}

\section{Functions with the hereditary small oscillation property on large sets}

Let $g\colon A\to\R$ with $A\subseteq X$.
Recall the notion of the oscillation $\osc(g,x)$ of $g$ at a point $x\in A$. Namely, let
$$\osc(g,x):=\lim_{\ve\to 0^+}\on{diam}(g(A\cap B(x,\ve)) $$
where $B(x,\ve)$ denotes the open ball centered at $x$ with radius $\ve$.
We know that $g$ is continuous at $x\in A$ if and only if $\osc(g,x)=0$.

The collection $\Sigma$ will play a role of large subsets of $X$. We say that a family $\Sigma$
is {\em  dense} if, for each $P\in\Sigma$ and every ball $B(x,r)$ with $x\in P$ there exists $Q\in\Sigma$
contained in $P\cap B(x,r)$. 

Given a family of non-empty subsets of $X$, we say that a function $f\colon X\to\R$ has:
\begin{itemize}
\item the {\em hereditary continuous restriction property} ($\HCRP$) with respect to $\Sigma$ whenever
for each $P\in\Sigma$ there exists a subset $Q\subseteq P$ in $\Sigma$ such that $f\res Q$ is continuous;
\item the {\em hereditary small oscillation property} ($\HSOP$) with respect to $\Sigma$ whenever
for every $\alpha >0$ and each $P\in\Sigma$ there exists a subset $Q\subseteq P$ in $\Sigma$ such that 
$\osc(f\res Q,x)<\alpha$ for all $x\in Q$.
\end{itemize}
We say that a family $\mc F\subseteq \R^X$ has the $\HCRP$ (respectively, the $\HSOP$) with respect to $\Sigma$ whenever every function $f$ in $\mc F$ has a property with the same name. We use this terminology to simplify the notation in our results. Note that the HCRP was used in \cite{BET} in an implicit way without this special notation. 
It is a stronger version of the {\it continuous restrictions property} (CRP) introduced by Recław in \cite{Reclaw}.

Clearly, the hereditary continuous restriction property for $f$ implies its hereditary small oscillation property.
In the next section, we will discuss some examples for which the reverse implication holds.

Now, our aim is to prove the following general result. 
We will consider the following statements:
\begin{itemize}
	\item[(i)] 
	$f$ has the $\HCRP$ with respect to $\Sigma$;
	\item[(ii)]
	$f$ has the $\HSOP$ with respect to $\Sigma$;
	\item[(iii)]
	%Player II has a winning strategy in the game $G_2(\Sigma,f)$;
	%\item[(iii')]
	%Player I has a winning strategy in the game $G_2(\Sigma,f)$;
	%\item[(iv)]
	Player II has a winning strategy in the game $G_1(\Sigma,f)$;
	\item[(iv)]
	Player I has a winning strategy in the game $G_1(\Sigma,f)$.
\end{itemize}

\begin{theorem} \label{TW}
Assume that $(X,d)$ is a complete metric space and $\Sigma$ is a dense family whose members are non-empty closed subsets of $X$.
	Then  {\em (ii)} $\Leftrightarrow$ {\em (iii)} %\Leftrightarrow$ {\em (iv)} 
 and {\em $\neg$(ii)} $\Leftrightarrow$ {\em (iv)}. 
 %$\Leftrightarrow$ {\em (iv')}.
	%Hence if {\em (i)} $\Rightarrow$ {\em (ii)}, 
	%then all these conditions are equivalent.
	This means that the game $G_1(\Sigma,f)$ is determined in this case.
	\end{theorem}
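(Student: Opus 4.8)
The plan is to prove the two forward implications (ii)$\Rightarrow$(iii) and $\neg$(ii)$\Rightarrow$(iv) by exhibiting explicit winning strategies, and then to recover the remaining implications and determinacy from the elementary fact that both players cannot simultaneously have winning strategies in one and the same game.

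For (ii)$\Rightarrow$(iii), Player~II builds a decreasing sequence of sets in $\Sigma$ with simultaneously shrinking diameters and shrinking oscillation bounds. After Player~I opens with $P\in\Sigma$, she uses the $\HSOP$ to choose $P_0\subseteq P$ in $\Sigma$ with $\osc(f\res P_0,\cdot)<1$; at stage $n\ge 1$, having seen $x_n\in P_{n-1}$, she invokes density to find a member of $\Sigma$ inside $P_{n-1}\cap B(x_n,2^{-n})$ and then the $\HSOP$ to shrink it to $P_n$ with $\osc(f\res P_n,\cdot)<2^{-n}$. The sets stay nested inside $P$ with $\diam(P_n)\to 0$, so by closedness of the members of $\Sigma$ and completeness of $X$, Cantor's intersection theorem gives $\bigcap_n P_n=\{x\}$; since $x_m\in P_{m-1}\subseteq P_n$ and $x\in P_n$ for $m>n$, we get $d(x_m,x)\le\diam(P_n)\to 0$, hence $x_m\to x$. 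The delicate step, which I regard as the main obstacle, is deducing $\lim_n f(x_n)=f(x)$ from the \emph{local} oscillation bounds: given $\eta>0$, choose $n$ with $2^{-n}<\eta$; from $x\in P_n$ and $\osc(f\res P_n,x)<2^{-n}$ pick $\varepsilon>0$ with $\diam\big(f(P_n\cap B(x,\varepsilon))\big)<\eta$; then for large $m$ both $f(x_m)$ and $f(x)$ lie in $f(P_n\cap B(x,\varepsilon))$ (as $x_m\to x$ and $x_m\in P_n$), so $|f(x_m)-f(x)|<\eta$.

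For $\neg$(ii)$\Rightarrow$(iv) it suffices for Player~I to force $\langle f(x_n)\rangle$ to fail to be Cauchy, since then it cannot converge to $f(\lim x_n)$. Fixing the witnesses $\alpha>0$ and $P^*\in\Sigma$ from $\neg$(ii), Player~I opens with $P=P^*$. For every $P_n\subseteq P^*$ in $\Sigma$ the failure of the $\HSOP$ produces a point of oscillation $\ge\alpha$, forcing $\diam\big(f(P_n)\big)\ge\alpha$; hence Player~I can always select $x_{n+1}\in P_n$ with $|f(x_{n+1})-f(x_n)|\ge\alpha/4$. Consecutive $f$-values stay bounded apart, so $\langle f(x_n)\rangle$ is not Cauchy and Player~I wins. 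This direction uses neither completeness nor density.

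Finally, since no game admits winning strategies for both players, (iii) and (iv) exclude each other. From $\neg$(ii)$\Rightarrow$(iv) and (iv)$\Rightarrow\neg$(iii) we obtain $\neg$(ii)$\Rightarrow\neg$(iii), i.e. (iii)$\Rightarrow$(ii); symmetrically, from (ii)$\Rightarrow$(iii) and (iii)$\Rightarrow\neg$(iv) we obtain (ii)$\Rightarrow\neg$(iv), i.e. (iv)$\Rightarrow\neg$(ii). Thus (ii)$\Leftrightarrow$(iii) and $\neg$(ii)$\Leftrightarrow$(iv); and as exactly one of (ii), $\neg$(ii) holds, exactly one player has a winning strategy, so $G_1(\Sigma,f)$ is determined.
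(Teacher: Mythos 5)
Your proposal is correct and follows the same overall plan as the paper's proof: the same Player~II strategy (density plus $\HSOP$ to build nested sets inside shrinking balls, then closedness and completeness to produce the limit point), the same Player~I strategy forcing consecutive $f$-values to stay at least $\alpha/4$ apart, and the same bookkeeping deriving both equivalences and determinacy from the fact that the two players cannot both have winning strategies in the same game. There is one step where your verification genuinely differs from, and is more careful than, the paper's. In the (ii)~$\Rightarrow$~(iii) direction the paper asserts that the $\HSOP$ yields sets $P_k\in\Sigma$ with the \emph{global} bound $|f(x)-f(x')|<1/k$ for all $x,x'\in P_k$; this does not follow literally from the definition of $\HSOP$, which only bounds the pointwise oscillation (a continuous unbounded function has oscillation zero everywhere), and repairing it requires one extra application of density to shrink to a ball on which the oscillation bound at a single point controls the diameter of the image. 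You avoid this issue entirely: you keep only the pointwise bounds $\osc(f\res P_n,\cdot)<2^{-n}$ and invoke the oscillation of $f\res P_n$ at the limit point $x\in\bigcap_n P_n$ itself, which is exactly what is needed to conclude $f(x_m)\to f(x)$. Likewise, your derivation of Player~I's choice from $\diam(f(P_n))\ge\alpha$ is a slightly cleaner packaging of the paper's two-case analysis, but it is the same idea.
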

	
	\begin{proof}
Fix $f\colon X\to\R$. 

``(ii) $\Rightarrow$ (iii)''. 
Assume that $f$ has the $\HSOP$ with respect to $\Sigma$.
Let Player I choose $P\in\Sigma$. Let Player II pick a subset $P_0\in\Sigma$ of $P$ such that $\osc(f\res P_0,x)<1$ for $x\in P_0$.  Assume that Player I has made his $k$th move. So, points $x_1,x_2,\dots,x_k$ have been chosen. Since the family $\Sigma$ is dense, there is $Q\in\Sigma$ such that
$$Q\subseteq P_{k-1}\cap B\left(x_{k},\frac{1}{2^k}\right).$$
By the assumption on $f$, there is $P_{k}\subseteq Q$ in $\Sigma$ with $|f(x)-f(x')|<\frac{1}{k}$ for all $x,x'\in P_{k}$. Let Player II play  $P_{k}$ at his $k$th move.
Then Player I will choose $x_{k+1}\in P_{k}$ and so on.

Observe that $\la x_i\ra$ is a Cauchy sequence. Indeed, fix positive integers $N$ and $k>N$. Then 
$$d(x_{N},x_{k}) \le d(x_{N},x_{N+1})+ d(x_{N+1},x_{N+2})+d(x_{N+2},x_{N+3}) +$$
$$\dots+d( x_{k-1},x_{k}).$$
Since for $j=N, N+1,\dots, k$ we have $x_j,x_{j+1}\in B(x_j,\frac{1}{2^j})$, so
$$d(x_{j},x_{j+1})<\frac{2}{2^j}$$
%\;\;\mbox{ and }\;\;
%d(x_{2j+2},d_{2j+3}) <\frac{1}{2^{j+1}},$$
%it follows that
and
$$d(x_{N},x_{k}) \le\sum_{j=N}^\infty \frac{1}{2^j}<\frac{1}{2^{N-1}}$$
%+\frac{1}{2^{N+1}}+\frac{1}{2^{N+1}}+\frac{1}{2^{N+2}}
%+\dots+\frac{1}{2^{k-1}}+\frac{1}{2^k}<3\sum_{j=N}^\infty\frac{1}{2^j}=\frac{3}{2^{N-1}}.$$
The remaining details are left to the reader.

Since $X$ is complete, there exists $x\in X$ such that $\lim_{i} x_i=x$. Since the sets $P_{k}$ are closed,  $x\in\bigcap_{k}P_{k}$, so $|f(x)-f(x_i)|<\frac{1}{k}$ for $i>k$ and consequently, $\lim_{i} f(x_i)=f(x)$.

%(iii) $\Rightarrow$ (iv) follows from Remark \ref{remark}.

``$\neg$(ii) $\Rightarrow$ (iv)''.
Suppose that $f$ has not the $\HSOP$ with respect to $\Sigma$. So, there exist a set $P\in\Sigma$ and $\alpha>0$ such that, for each $Q\subseteq P$ in $\Sigma$, there exists $x\in Q$ with $\on{osc}(f\res Q,x)\ge\alpha$. Let Player~I play as follows.
Firstly, he chooses $P$ as above. Next Player~II picks $P_0\in\Sigma$, $P_0\subseteq P$, and Player~I chooses any $x_1\in P_0$. Assume that for some integer $k> 0$ a point $x_{k-1}$ and a set $P_{k}$ have been chosen. 
Then Player~I picks any $x\in P_k$ and he considers two cases. If $|f(x)-f(x_{k-1})|>\frac{\alpha}{4}$ then he picks $x_k=x$. Otherwise, $|f(x)-f(x_{k-1})|\le\frac{\alpha}{4}$ and since $\osc(f\res P_k,x)\ge\alpha$, there exists $x'\in P_k$ with $|f(x)-f(x')|\ge\frac{\alpha}{2}$. Then $|f(x')-f(x_{k-1})|\ge \frac{\alpha}{4}$ and Player~II chooses $x_k=x'$. 
It guarantees that $|f(x_{k-1})-f(x_k)|\ge\frac{\alpha}{4}$ for any $k\in\N$. Hence the sequence $\la f(x_n)\ra$ is not convergent and Player~I wins the game $G_1(\Sigma,f)$. Thus Player I has the winning strategy in this game.	
%
%(iv) $\Rightarrow$ (ii). Suppose $\neg$(ii). Then Player I has the winning strategy in the game $G_1(\Sigma,f)$, hence Player II has no winning strategy in this game, so $\neg$(iv). 
%
%(iv') $\Rightarrow$ (iii') follows from Remark \ref{remark}.
%
%(iii') $\Rightarrow$ $\neg$(ii). If (ii) then Player II has the winning strategy in the game $G_1(\Sigma,f)$, thus Player I can not has the winning strategy in this game.

Finally, observe that in both cases, (ii) or $\neg$(ii), one of the players
has a winninig strategy in the game $G_1(\Sigma,f)$, so the game is determined.
\end{proof}

\begin{remark}\label{remark2}
Notice that the assumption that any set in $\Sigma$ is closed and $X$ is complete is only used in the proof of the implication (ii) $\Rightarrow$ (iii).
\end{remark}

Since the implication (i) $\Rightarrow$ (ii) holds for any $f\colon X\to\R$, we have the following fact.

\begin{corollary}
Assume that $(X,d)$ is a complete metric space, $\Sigma$ is a dense family whose members are non-empty closed subsets of $X$. If 
the implication (ii) $\Rightarrow$ (i) holds for a function $f\colon X\to\R$, then the statements 
(i),(ii),(iii),$\neg$(iv) are equivalent.
\end{corollary}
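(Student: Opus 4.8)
The plan is to assemble the desired equivalences purely from Theorem~\ref{TW} together with the two always-available implications and the single hypothesis granted in the statement. The corollary adds nothing to the standing assumptions of Theorem~\ref{TW}---namely that $(X,d)$ is complete and that $\Sigma$ is a dense family of non-empty closed sets---so I may invoke that theorem directly, placing both ``(ii) $\Leftrightarrow$ (iii)'' and ``$\neg$(ii) $\Leftrightarrow$ (iv)'' at my disposal.

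First I would establish the equivalence (i) $\Leftrightarrow$ (ii). The forward implication (i) $\Rightarrow$ (ii) was already observed to hold for every $f\colon X\to\R$, since the hereditary continuous restriction property trivially yields the hereditary small oscillation property. The converse (ii) $\Rightarrow$ (i) is precisely the hypothesis imposed on the particular $f$ appearing in the corollary. Combining the two gives (i) $\Leftrightarrow$ (ii).

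Next I would read off (ii) $\Leftrightarrow$ (iii) verbatim from Theorem~\ref{TW}, and to bring in statement (iv) I would contrapose the second assertion of that theorem: ``$\neg$(ii) $\Leftrightarrow$ (iv)'' is logically identical to ``(ii) $\Leftrightarrow$ $\neg$(iv)''. Chaining these facts yields
\[
\text{(i)} \Leftrightarrow \text{(ii)} \Leftrightarrow \text{(iii)} \qquad\text{and}\qquad \text{(ii)} \Leftrightarrow \neg\text{(iv)},
\]
whence the four statements (i), (ii), (iii), $\neg$(iv) are all mutually equivalent, as claimed.

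There is no genuine obstacle here: the whole content is the logical bookkeeping of three biconditionals, each either supplied by Theorem~\ref{TW} or immediate. The only point deserving a moment's care is the direction of the contraposition involving statement (iv)---one must remember that it is $\neg$(iv), and not (iv), that enters the chain, which reflects the fact that determinacy of $G_1(\Sigma,f)$ forces exactly one of the two players to possess a winning strategy.
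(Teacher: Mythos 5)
Your proposal is correct and matches the paper's intended argument exactly: the paper states this corollary without a separate proof, remarking only that (i) $\Rightarrow$ (ii) holds for every $f$, so the content is precisely the bookkeeping you carry out---the trivial implication, the hypothesized converse (ii) $\Rightarrow$ (i), and the two biconditionals of Theorem~\ref{TW} (the second contraposed to (ii) $\Leftrightarrow$ $\neg$(iv)). Nothing is missing.
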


\section{Applications}
Here we discuss some examples of families of functions with the hereditary continuous restriction property on large sets.
Three of them come from the article \cite{BET} where the authors considered characterizations by this property for certain functions that are measurable with respect to some known $\sigma$-algebras of sets.

Now, let $X$ be a Polish space without isolated points. 
By $\Perf$ we denote the family of all nonempty perfect subsets of $X$.
Consider three families of measurable functions.

\subsection*{Baire measurable functions} 
These are functions $f\colon X\to\R$ which are measurable with respect to $\sigma$-algebra of sets with the Baire property. It is well known that if $X$ is complete then Baire measurable functions are exactly those functions $f$ whose restrictions $f\res G$ to a residual (comeager) set $G$ 
are continuous (see e.g. \cite[Thm 8.38]{Ke}). It was proved in \cite[Thm 9]{BET} that $f$ is Baire measurable if and only if it has the $\HCRP$ with respect to $G_\delta$ sets $G\subseteq X$ having the property: there is a nonempty open set $U$ such that $G$ is residual in $U$. The family of these $G_\delta$ sets will be denoted by $\on{G_{Res}}$.
	
\subsection*{Functions measurable with respect to a measure}
Let $\mc M$ be the collection of sets which are measurable with respect to the completion $\overline{\mu}$ of a finite nonatomic Borel measure $\mu$ on $X$. and let $\Perf^+$ stand for the family of all perfect subsets of $X$ with positive measure. It was proved in \cite[Thm 8]{BET} that $f\colon X\to\R$ is $\mc M$-measurable if and only if it has the $\HCRP$ with respect to $\Perf^+$.

\subsection*{Marczewski measurable functions} These are functions $f\colon X\to\R$ which are measurable with respect to $\sigma$-algebra of (s)-sets $E\subseteq X$ with the property: for each $P\in\Perf$ there exists $Q\in\Perf$ such that either $Q\subseteq P\cap E$ or $Q\subseteq P\setminus E$, cf. \cite{Sz} and \cite{BET}. In \cite{Sz} the following nice characterization was established (see also \cite{BET}): a function $f$ is 
$(s)$-measurable if and only if it has the $\HCRP$ with respect to $\Perf$.

\subsection*{HSOP versus HCRP}
We are going to prove that for the above kinds of measurable functions, the $\HSOP$ is equivalent to the respective $\HCRP$.

\begin{proposition} \label{propa}
Let $X$ be a Polish space without isolated points and $f\colon X\to\R$. Then:
\begin{itemize}
\item[(a)] if $f$ has the $\HSOP$ with respect to $\Perf$, then it is Marczewski measurable;
\item[(b)] given a finite nonatomic Borel measure $\mu$ on $X$, if $f$ has the $\HSOP$ with respect to $\Perf^+$, then it is measurable with respect to $\overline{\mu}$;
\item[(c)] if $f$ has the $\HSOP$ with respect to $\on{G_{Res}}$, then it is Baire measurable.
\end{itemize}
Consequently, in all these cases, the $\HSOP$ is equivalent to the respective $\HCRP$ for $f$.
\end{proposition}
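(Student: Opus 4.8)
The plan is to reduce all three implications to a single extraction lemma and then distinguish the three largeness notions by the appropriate bookkeeping. The cited results identify Marczewski measurability with the HCRP with respect to $\Perf$, $\overline\mu$-measurability with the HCRP with respect to $\Perf^+$, and Baire measurability with the HCRP with respect to $\on{G_{Res}}$; since the HCRP trivially implies the HSOP, it suffices to prove (a)--(c), after which the final equivalence of HSOP and HCRP is immediate.

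First I would isolate an \emph{image-diameter lemma}: if $f$ has the HSOP with respect to a dense family $\Sigma$, then for every $P\in\Sigma$ and every $\alpha>0$ there is $Q\in\Sigma$ with $Q\subseteq P$ and $\diam f(Q)<\alpha$. The subtle point is that the HSOP controls only the \emph{oscillation} of $f$, and on a totally disconnected perfect set small oscillation at every point does not bound $\diam f(\cdot)$; this is exactly where density is used. One applies the HSOP to get $Q_0\subseteq P$ in $\Sigma$ with $\osc(f\res Q_0,x)<\alpha$ for all $x\in Q_0$, fixes a point $x_0\in Q_0$, uses the definition of oscillation to choose $\ve>0$ with $\diam f(Q_0\cap B(x_0,\ve))<\alpha$, and then uses density to draw a member $Q\in\Sigma$ inside $Q_0\cap B(x_0,\ve)$. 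For $\Perf^+$ one extra care is needed, since $P\cap B(x,r)$ may be $\mu$-null for a badly placed $x$: here $x_0$ must be chosen with $\mu(Q_0\cap B(x_0,\ve))>0$ for all small $\ve$, which is possible because, by Lindel\"of, the points of $Q_0$ possessing a $\mu$-null relative neighbourhood form a null set.

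For (a) I would run a Cantor-scheme fusion. From $P\in\Perf$ I build perfect sets $P_s$, $s\in 2^{<\omega}$, with $P_{s^\frown 0}$ and $P_{s^\frown 1}$ disjoint subsets of $P_s$ separated by a positive gap fixed in the construction, and with both $\diam P_s\le 2^{-|s|}$ and $\diam f(P_s)\le 2^{-|s|}$; the image bound comes from the lemma, after which I split inside a small-image perfect subset using two of its points and density, which simultaneously yields the spatial bound. The fusion $Q=\bigcap_n\bigcup_{|s|=n}P_s$ is a nonempty perfect set on which $f$ is continuous, since points of $Q$ lying in a common $P_s$ have $f$-values within $2^{-|s|}$ and the finitely many level-$n$ pieces are uniformly separated. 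This is precisely the HCRP with respect to $\Perf$, so $f$ is Marczewski measurable by the cited characterization.

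Cases (b) and (c) I would treat uniformly by \emph{exhaustion plus uniform approximation}, which avoids the delicate task of keeping a nested fusion of positive measure, respectively comeager. Fix $n$. Using the lemma and a maximal disjoint subfamily of $\{Q\in\Sigma:\diam f(Q)<1/n\}$, I cover $X$ up to a null set (for $\Perf^+$), respectively up to a meager set (for $\on{G_{Res}}$), by countably many $Q_i$; countability is forced in (b) by finiteness of $\mu$ and in (c) by second countability of $X$, and fullness of the cover follows from maximality together with inner regularity, respectively from the fact that a comeager set contains a comeager $G_\delta$. Choosing $c_i$ with $f(Q_i)\subseteq(c_i-1/n,c_i+1/n)$ and setting $g_n:=c_i$ on $Q_i$ gives a function whose level sets are countable unions of Borel, respectively Baire, sets, hence $g_n$ is $\overline\mu$-measurable, respectively Baire measurable, with $|f-g_n|<1/n$ off a null, respectively meager, set. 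Letting $n\to\infty$, $f$ coincides off a null, respectively meager, set with $\limsup_n g_n$, and is therefore $\overline\mu$-measurable by completeness, respectively Baire measurable since meager sets are negligible for this algebra. The main obstacle throughout is the passage from oscillation to image-diameter in the lemma, together with securing countability and fullness of the covers in (b)--(c); once these are in place the remaining steps are routine.
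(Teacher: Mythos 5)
Your proposal is correct, and for parts (b) and (c) it takes a genuinely different route from the paper; part (a) is essentially the paper's own argument (the paper runs the same Cantor-scheme fusion, but keeps the hereditary control in the form $\osc(f\res Q_s,x)<1/n$ together with $\diam(Q_s)<1/n$, rather than your image-diameter bound plus explicit separation gaps). For (b) and (c) the paper never leaves the HCRP framework: it proves $\HSOP\Rightarrow\HCRP$ directly, in (b) by taking a maximal disjoint family of positive-measure perfect sets with oscillation $<1$, passing to a \emph{finite} subfamily whose union $S_1$ satisfies $\mu(P\setminus S_1)<\mu(P)/2^2$ (finiteness is exactly what preserves the pointwise oscillation bound on a union of disjoint closed pieces), and iterating to get a decreasing sequence $S_n$ with $\mu(S_n\setminus S_{n+1})<\mu(P)/2^{n+2}$ and $\osc(f\res S_n,\cdot)<1/n$, so that $S=\bigcap_n S_n$ is closed of positive measure and carries a continuous restriction; in (c) the analogous category argument produces a residual $G_\delta$ set $H\subseteq G$ with $\osc(f\res H,x)=0$. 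Measurability is then read off from one direction of the cited characterizations. You instead prove measurability in (b) and (c) directly, via countable almost-covers by small-$f$-image members and approximation off a null (resp.\ meager) set by countably valued measurable functions $g_n$, and you recover the HCRP from the \emph{other} direction (measurable $\Rightarrow$ HCRP) of the same cited results; since those results are stated as equivalences, this is legitimate. What your route buys is a uniform treatment of (b) and (c) that bypasses the paper's nested measure/category bookkeeping; what it costs is the reversed reliance on the cited theorems, plus dependence on your image-diameter lemma, whose appeal to ``density'' needs for $\on{G_{Res}}$ the same care you exercised for $\Perf^+$ (choose the base point $x_0$ inside the witnessing open set; with the paper's literal definition of density, neither family is dense at badly placed points, an imprecision the paper itself shares). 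These are refinements, not gaps.
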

\begin{proof}
In fact, we will use the above-mentioned characterizations of the three kinds of measurability. So, we
will show that the $\HSOP$ implies the respective $\HCRP$, as it is stated in the final assertion. 

Ad (a). Let $P\in\on{Perf}$. Using the assumption, pick a perfect subset $Q$ of $P$ such that $\on{osc}(f\res Q,x)<1$ for each $x\in Q$. Then find two disjoint perfect subsets
$Q_{\la 0\ra}$, $Q_{\la 1\ra}$ of $Q$, of diameter less than $1$. 
Assume that for $n\in\N$, perfect pairwise disjoint sets $Q_s$, $s\in\{0,1\}^n$, have been chosen where each of them has diameter less than $1/n$, and $\on{osc}(f\res Q_s, x)<1/n$ for every $x\in Q_s$. For any fixed $Q_s$ find two disjoint perfect subsets $Q_{s\frown 0}$ and $Q_{s\frown 1}$ of $Q_s$, of diameter less than $1/(n+1)$, with $\on{osc}(f\res Q_{s\frown i}, x)<1/(n+1)$ for all $x\in Q_{s\frown i}$ and $i\in\{0,1\}$. Finally, define
$$Q^\star:=\bigcap_{n\in\N}\bigcup_{s\in\{0,1\}^n} Q_s.$$
Note that $Q^\star$ is a perfect subset of $P$ and $f\res Q^\star$ is continuous since its oscillation at every point of $Q^\star$ is zero. 

Ad (b). We should be more careful in comparison with the previous case. Fix a perfect set $P$ of positive measure. In the first step, using the assumption, we consider a maximal disjoint family $\mc J_1$ of perfect sets $Q\subseteq P$ of positive measure such that $\osc(f\res Q,x)<1$ for all $Q\in\mc J_1$ and $x\in Q$. Note that $\mc J_1$ is countable and $\mu(\bigcup {\mc J_1})=\mu(P)$.
Pick a finite family ${\mc J}_1^*\subseteq\mc J_1$ such that for $S_1:=\bigcup{\mc J_1^*}$ we have
$\mu(P\setminus S_1)<\mu(P)/2^2$. Then $\osc(f\res S_1,x)<1$ for each $x\in S_1$. In the next step, in a similar way, we select finite disjoint family of perfect subsets of $S_1$ whose union, called $S_2$, satisfies the conditions $\mu(S_1\setminus S_2)<\mu(P)/2^3$ and $\osc(f\res S_2,x)<1/2$ 
for each $x\in S_2$. In such a way we obtain a decreasing sequence of perfect sets $S_n$ such that for every $n\in\N$ we have
$$\mu(S_n\setminus S_{n+1})<\mu(P)/2^{n+2}\;\;\mbox{  and }\;\;\osc(f\res S_n,x)<\frac{1}{n}\;\;\mbox{ for each}\;\; x\in S_n.$$
Then note that the set $S:=\bigcap_{n\in\N}S_n$ is closed and $\mu(S)>\mu(P)/2$. Also, $\osc(f\res S, x)=0$ for each $x\in S$. Finally, we can select a perfect set $Q\subseteq S$ of positive measure. Then $f\res Q$ is continuous. 

Ad (c). Fix a $G_\delta$ set $G$ that is contained in a nonempty open set $U$ where $M:=U\setminus G$ is meager (that is, of the first category) of type $F_\sigma$. Using the assumption, we can find, for each $k\in\N$,  a maximal disjoint family $\mc J_k$ of open subsets of $U$ such that for every $V\in \mc J_\red{k}$ there is a meager set $M_V\subseteq V$ of type $F_\sigma$ with $\osc(f\res (V\setminus M_V), x)<1/k$ 
for each $x\in V\setminus M_V$. Observe that for each $k\in\N$,
\begin{itemize}
\item the family $\mc J_k$ is countable;
\item the set $\bigcup{\mc J_k}$ is open and dense in $U$, so $N_k:= U\setminus\bigcup\mc J_k$ is nowhere dense of type $F_\sigma$;
\item the set $W_k:=N_k\cup\bigcup_{V\in\mc J_k}M_V$ is meager of type $F_\sigma$, and 
$\osc(f\res (G\setminus W_k), x)<1/k$ for each $x\in G\setminus W_k$.  
\end{itemize}
Next, we define an increasing sequence of meager sets $T_n$ (for $n\in\N$) of type $F_\sigma$, contained in $U$, such that
$\osc(f\res (U\setminus T_n), x)<1/n$ for each $n\in U\setminus T_k$. Namely, let $T_1:=M\cup W_1$, and
$T_{n+1}:=T_n\cup\bigcup_{i\leq n} W_i$. Finally, observe that $T:=\bigcup_{n\in\N}T_n$ is a meager subset of $U$,
of type $F_\sigma$ and $H:=U\setminus T$ is a set, residual in $U$, of type $G_\delta$, contained in $G$, with $\osc(f\res H,x)=0$ for each $x\in H$, as desired.
\end{proof}

\begin{remark}
Statements (b) and (c) in the above proposition can also be deduced from~\cite[Lemma 10]{BNS}. However, for the reader's convenience, we have decided to include here the direct proofs of these facts.
\end{remark}

Observe that, if $X$ is a Polish space without isolated points, the families $\on{G_{Res}}$, $\Perf^+$ and 
$\Perf$ are dense. Additionally, $\Perf^+$ and $\Perf$ consist of closed sets.
So, by Proposition \ref{propa} and Theorem \ref{TW}, we can infer the following corollaries.

\begin{corollary} \label{c1}
Let $X$ be a Polish space without isolated points. Let $f\colon X\to\R$.
Given a finite nonatomic Borel measure $\mu$ on $X$, let $\Sigma:=\Perf^+$. Then the game $G_1(\Sigma,f)$ is determined and 
\begin{itemize}
    \item 
    if $f$ is measurable with respect to the completion of $\mu$ then Player~II has a winning strategy in the game $G_1(\Sigma,f)$,
    \item 
    if $f$ s not measurable with respect to the completion of $\mu$ then Player~I has a winning strategy in the game $G_1(\Sigma,f)$.
    \end{itemize}

\end{corollary}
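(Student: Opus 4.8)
The plan is to derive Corollary \ref{c1} as an immediate consequence of the two main results already established, namely Theorem \ref{TW} and Proposition \ref{propa}(b). The key observation, which I would state first, is that all the hypotheses of Theorem \ref{TW} are satisfied for the family $\Sigma:=\Perf^+$: since $X$ is Polish it is completely metrizable, and as already noted in the excerpt, $\Perf^+$ is a dense family consisting of non-empty closed (indeed perfect) subsets of $X$. Hence Theorem \ref{TW} applies verbatim, giving the equivalences (ii) $\Leftrightarrow$ (iii) and $\neg$(ii) $\Leftrightarrow$ (iv), and in particular the determinacy of $G_1(\Sigma,f)$.

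Next I would handle the two bullet points by translating measurability into the HSOP via the characterizations collected in Section~4. For the first bullet, suppose $f$ is measurable with respect to the completion $\overline{\mu}$. By \cite[Thm 8]{BET} this is equivalent to $f$ having the $\HCRP$ with respect to $\Perf^+$, and since $\HCRP$ trivially implies $\HSOP$ (statement (i) $\Rightarrow$ (ii), valid for every $f$), condition (ii) holds. By the implication (ii) $\Rightarrow$ (iii) of Theorem \ref{TW}, Player~II has a winning strategy in $G_1(\Sigma,f)$.

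For the second bullet I argue by contraposition through Proposition \ref{propa}(b). Suppose $f$ is \emph{not} $\overline{\mu}$-measurable. Then $f$ cannot have the $\HSOP$ with respect to $\Perf^+$: for if it did, Proposition \ref{propa}(b) would force $f$ to be $\overline{\mu}$-measurable, contrary to assumption. Thus $\neg$(ii) holds, and by the implication $\neg$(ii) $\Rightarrow$ (iv) of Theorem \ref{TW}, Player~I has a winning strategy in $G_1(\Sigma,f)$. Determinacy, already recorded above, guarantees that these two cases are mutually exclusive and exhaustive, so the dichotomy in the statement is complete.

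Since every step is a direct invocation of an earlier result, there is essentially no genuine obstacle here; the only point requiring a moment's care is the logical bookkeeping in the non-measurable case, where one must use Proposition \ref{propa}(b) in its contrapositive form rather than appealing to a (false) converse. It is worth emphasizing that the role of Proposition \ref{propa} is precisely to close the gap between $\HSOP$ and $\HCRP$ for this specific $\Sigma$, so that the game-theoretic dichotomy of Theorem \ref{TW}, which is phrased in terms of $\HSOP$, can be restated in the cleaner language of $\overline{\mu}$-measurability.
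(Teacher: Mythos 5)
Your proposal is correct and follows exactly the paper's own route: the paper derives Corollary \ref{c1} in one line by observing that $\Perf^+$ is a dense family of non-empty closed sets in the Polish space $X$ and then invoking Proposition \ref{propa}(b) together with Theorem \ref{TW}. Your write-up merely spells out the bookkeeping the paper leaves implicit (the use of \cite[Thm 8]{BET} for the measurable case and the contrapositive of Proposition \ref{propa}(b) for the non-measurable case), so there is no substantive difference.
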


Note that the 
the same result was established in \cite{BNS} in a bit different manner.

\begin{corollary} \label{c2}
Let $X$ be a Polish space without isolated points. Let $f\colon X\to\R$
and $\Sigma:=\Perf$.
Then  the game $G_1(\Sigma,f)$ is determined and 
\begin{itemize}
    \item 
    if $f$ is Marczewski measurable  then Player II has a winning strategy in the game $G_1(\Sigma,f)$,
    \item 
    if $f$ is not Marczewski measurable  then Player I has a winning strategy in the game $G_1(\Sigma,f)$.
    \end{itemize}

\end{corollary}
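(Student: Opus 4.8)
The plan is to deduce Corollary \ref{c2} directly from the machinery already assembled, treating it as a clean specialization. First I would verify the structural hypotheses needed to invoke Theorem \ref{TW}. The space $X$ is Polish without isolated points, hence completely metrizable, so I fix a complete metric $d$ compatible with its topology. Taking $\Sigma:=\Perf$, I note that the members of $\Perf$ are by definition nonempty and, being perfect subsets of a metric space, are closed; moreover the family is nonempty precisely because $X$ itself (having no isolated points) contains a nonempty perfect set. The density of $\Perf$ is already recorded in the excerpt: for any $P\in\Perf$ and any ball $B(x,r)$ with $x\in P$, the set $P\cap B(x,r)$ contains a nonempty perfect set, so some $Q\in\Perf$ lies inside it. Thus all the hypotheses of Theorem \ref{TW} are met, and that theorem immediately yields that the game $G_1(\Perf,f)$ is determined, with winning strategies governed by whether or not $f$ has the $\HSOP$ with respect to $\Perf$.

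Next I would bridge from $\HSOP$ to Marczewski measurability. The cited characterization (from \cite{Sz}, as recalled in the excerpt) states that $f$ is $(s)$-measurable if and only if $f$ has the $\HCRP$ with respect to $\Perf$. Proposition \ref{propa}(a) supplies the nontrivial direction: if $f$ has the $\HSOP$ with respect to $\Perf$, then $f$ has the $\HCRP$ with respect to $\Perf$, hence is Marczewski measurable. The reverse implication $\HCRP\Rightarrow\HSOP$ is trivial for any $f$, as already noted in the text. Consequently, for $f\colon X\to\R$ the condition \emph{$f$ has the $\HSOP$ with respect to $\Perf$} is logically equivalent to \emph{$f$ is Marczewski measurable}.

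With these two equivalences in hand, the corollary follows by matching statements. By Theorem \ref{TW}, statement (ii)---that $f$ has the $\HSOP$ with respect to $\Sigma=\Perf$---is equivalent to statement (iii), the existence of a winning strategy for Player~II in $G_1(\Perf,f)$; and the negation $\neg$(ii) is equivalent to statement (iv), the existence of a winning strategy for Player~I. Substituting the measurability equivalence, I conclude that $f$ being Marczewski measurable is equivalent to Player~II having a winning strategy, while $f$ not being Marczewski measurable is equivalent to Player~I having a winning strategy. This is exactly the two-bulleted assertion of the corollary, and the determinacy clause is the final sentence of Theorem \ref{TW}.

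I do not anticipate a genuine obstacle here, since the corollary is an assembly of results proved earlier; the only point demanding care is the verification that $\Perf$ genuinely satisfies the density and closedness hypotheses of Theorem \ref{TW}, which is precisely the content of the sentence in the excerpt asserting that $\Perf^+$ and $\Perf$ are dense and consist of closed sets. If one wished to make the argument self-contained, the mildest amount of work would go into recalling that a nonempty perfect subset of a Polish space again contains nonempty perfect subsets of arbitrarily small diameter clustering at any of its points, which is the standard Cantor-scheme fact underlying density; but given the excerpt already states this, the proof reduces to citing Proposition \ref{propa}, the characterization from \cite{Sz}, and Theorem \ref{TW} in sequence.
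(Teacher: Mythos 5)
Your proposal is correct and follows exactly the paper's route: the paper derives Corollary~\ref{c2} by observing that $\Perf$ is dense and consists of nonempty closed subsets of the complete space $X$, then applying Theorem~\ref{TW} together with Proposition~\ref{propa}(a) and the characterization of $(s)$-measurability as $\HCRP$ with respect to $\Perf$. Your write-up merely makes explicit the same chain of equivalences (Marczewski measurable $\Leftrightarrow$ $\HCRP$ $\Leftrightarrow$ $\HSOP$ $\Leftrightarrow$ Player~II wins, and the negations for Player~I) that the paper leaves to the reader.
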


We have a small problem with Baire measurable functions since the respective family $\on{G_{Res}}$
consists of $G_\delta$ sets, and the closedness of sets was used in the proof of Theorem \ref{TW} (cf. Remark~\ref{remark2}). We will solve this problem by the following modification.

\begin{corollary} \label{c3}
	Let $X$ be a Polish space without isolated points. Let $f\colon X\to\R$
	and $\Sigma:=\on{G_{Res}}$. Then the game $G_1(\Sigma,f)$ is determined and 
 \begin{itemize}
    \item 
    if $f$ is Baire measurable  then Player II has a winning strategy in the game $G_1(\Sigma,f)$,
    \item 
    if $f$ is not Baire measurable  then Player I has a winning strategy in the game $G_1(\Sigma,f)$.
    \end{itemize}
    
\end{corollary}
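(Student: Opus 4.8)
The plan is to reduce everything to the small-oscillation dichotomy of Theorem~\ref{TW}, repairing the single step where closedness of the members of $\Sigma$ was used. First I would record the chain of equivalences for $\Sigma=\on{G_{Res}}$: by \cite[Thm 9]{BET} a function is Baire measurable iff it has the $\HCRP$ with respect to $\on{G_{Res}}$; the $\HCRP$ trivially implies the $\HSOP$; and Proposition~\ref{propa}(c) supplies the converse implication $\HSOP\Rightarrow$ Baire measurable. Hence for $\Sigma=\on{G_{Res}}$ the three statements ``$f$ is Baire measurable'', (i) and (ii) all coincide. Since $\on{G_{Res}}$ is dense (as noted before Corollary~\ref{c1}), the implication $\neg$(ii)$\Rightarrow$(iv) of Theorem~\ref{TW} applies verbatim---by Remark~\ref{remark2} it never used closedness---so if $f$ is not Baire measurable, Player~I has a winning strategy.

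It remains to show that if $f$ is Baire measurable then Player~II wins; this is exactly the step (ii)$\Rightarrow$(iii) whose original proof relied on closed $P_k$. Here I would not invoke the $\HSOP$ directly but the equivalent description of Baire measurability recalled in Section~4 (see \cite[Thm 8.38]{Ke}): there is a comeager $G_\delta$ set $C$ with $f\res C$ continuous. Write $C=\bigcap_{m\ge 1}G_m$ with $G_m$ dense open and decreasing. I design Player~II's strategy so that the whole play stays inside $C$ while the limit point is trapped in every $G_m$. Concretely, when Player~I opens with $P$ (residual in an open set $U$), Player~II answers $P_0:=P\cap C$, which lies in $\on{G_{Res}}$ (a $G_\delta$ set still residual in $U$) and on which $f$ is continuous. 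At stage $k\ge 1$, after Player~I plays $x_k\in P_{k-1}\subseteq C\subseteq G_k$, Player~II chooses a radius $r_k<2^{-k}$ small enough that $\overline{B(x_k,r_k)}\subseteq G_k$ (possible since $G_k$ is open and $x_k\in G_k$), and uses density of $\on{G_{Res}}$ to pick $P_k\in\on{G_{Res}}$ with $P_k\subseteq P_{k-1}\cap B(x_k,r_k)$.

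To verify that this strategy wins, note the $P_k$ are nested with $\diam P_k\le 2r_k\to 0$, so by completeness the sequence $\langle x_i\rangle$ is Cauchy and converges to a single point $x$, and for each $k$ one has $x_i\in P_k$ for all $i>k$, whence $x\in\overline{P_k}$. Since $P_k\subseteq B(x_k,r_k)$ we get $\overline{P_k}\subseteq\overline{B(x_k,r_k)}\subseteq G_k$, so $x\in\bigcap_{k\ge 1}G_k=C$; and every $x_i\in C$ as well. Continuity of $f\res C$ then forces $f(x_i)\to f(x)$, so Player~II wins $G_1(\on{G_{Res}},f)$. Combining the two cases shows the game is determined, as claimed.

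The main obstacle is precisely the non-closedness flagged before the statement: in Theorem~\ref{TW} the limit point was captured in $\bigcap_k P_k$ because the $P_k$ were closed, whereas a $G_\delta$ set residual in an open set need not contain the limits of its own sequences. The device that overcomes this is forcing $\overline{P_k}\subseteq G_k$ at stage~$k$, which drives the (unavoidable) limit point into each defining dense open set of $C$ and hence back into the set where $f$ is continuous; the diameter control $r_k<2^{-k}$ simultaneously guarantees convergence of $\langle x_i\rangle$.
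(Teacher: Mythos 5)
Your proposal is correct and takes essentially the same approach as the paper: the paper's proof writes the exceptional meager set as an increasing union of closed sets $F_k$ and has Player II ensure $\cl(P_k)\cap F_k=\emptyset$ together with diameter control, which is precisely your condition $\cl(P_k)\subseteq G_k$ phrased for the complements $G_k=X\setminus F_k$ of the comeager $G_\delta$ set $C=X\setminus M$ on which $f$ is continuous. The reduction of the non-measurable case to Theorem~\ref{TW} via Proposition~\ref{propa}(c) and Remark~\ref{remark2} is likewise the same in both arguments.
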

\begin{proof}
By Remark \ref{remark2}, only the first implication 
has to be proved. Assume that $f\colon X\to\R$ has the Baire property. Then there is a meager $F_\sigma$ set $M\subseteq  X$ such that $f\res (X\setminus M)$  is continuous. Let $M=\bigcup_{n\in\N}F_n$, where all sets $F_n$ are closed and $F_n\subseteq F_{n+1}$ for any $n\in\N$.
Let Player I choose $P\in \on{G_{Res}}$. Assume that $P$ is residual in a non-empty open set $U\subseteq X$. 
Let Player II pick a subset $P_0\in\on{G_{Res}}$ of $P\setminus M$ such that $\cl(P_0)\cap F_1=\emptyset$.  Assume that Player I has made his $k$th move. So, points $x_1,x_2,\dots,x_k$ have been chosen. Then Player II plays a set $P_{k}\in\on{G_{Res}}$ such that $P_{k}\subseteq P_{k-1}\cap B\left(x_{k},\frac{1}{2^k}\right)$, $\diam(P_{k})<\frac{1}{2^k}$, and $\cl(P_{k})\cap F_k=\emptyset$.
Then Player I will choose $x_{k+1}\in P_{k}$ and so on. As in the proof of Theorem \ref{TW}, one can verify that the sequence $\la x_i\ra$ converges to some $x\in X$. Note that $x_i\in\cl(P_{k})$ for every $i>k$, hence $x\in\bigcap_{i\in\N}\cl(P_{i})$ and $\bigcap_{i\in\N}\cl(P_{i})\cap M=\emptyset$,  so $x\in X\setminus M$. Since all points $x_i$ belong to $P_0\subseteq X\setminus M$ and  $f\res (X\setminus M)$ is continuous at $x$, so $\lim_{i}f(x_i)=f(x)$. 
\end{proof}

Note that in \cite{BNS} the Baire measurability of a function was characterized in the language of the winning strategy of Player II in the game $G_1(\Sigma,f)$ where $\Sigma$ consists of Baire nonmeager sets.

\section{Cliquish functions}
Let $X$ be a topological space. Recall that a function $f\colon X\to\R$ is:
\begin{itemize}
	\item {\it cliquish} if for each non-empty open set $W$ and $\varepsilon>0$ there is a non-empty open set $U\subseteq W$ with $\diam(f(U))<\varepsilon$; see \cite{Thiel};
	\item {\it pointwise discontinuous} if the set $C(f)$ of continuity points of $f$ is dense in $X$; see \cite{KKur}. 
\end{itemize}
It is well-known and easy to show that each pointwise discontinuous function is also cliquish. Moreover, if $X$ is a Baire space (i.e., $X$ is non-meager in itself) then those notions are equivalent.
The following fact results directly from the definition.

\begin{lemma}\label{lemma}
Assume that $X$ is a regular ($T_3$) space. Then $f\colon X\to\R$ is cliquish if and only if for every $\ve >0$ and for each non-empty open set $W\subseteq X$ there is a non-empty open set $U\subseteq W$ with $\diam(f(\cl(U))<\varepsilon$.
\end{lemma}

Let $\mathrm{CLO}(X)$ denote the family of all closed subsets of $X$ possessing non-empty interior.
\begin{corollary}
	Assume that $X$ is a complete metric space and $\Sigma=\mathrm{CLO}(X)$. Then  the game $G_1(\Sigma,f)$ is determined and 
 \begin{itemize}
    \item 
    if $f$ is cliquish  then Player II has a winning strategy in the game $G_1(\Sigma,f)$,
    \item 
    if $f$ is not cliquish  then Player I has a winning strategy in the game $G_1(\Sigma,f)$.
    \end{itemize}
	\end{corollary}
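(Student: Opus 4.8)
The plan is to connect cliquishness with the hereditary small oscillation property and then read off the conclusion from the two halves of Theorem~\ref{TW}. The bridge is the equivalence, for $\Sigma=\mathrm{CLO}(X)$,
$$f\text{ is cliquish}\iff f\text{ has the }\HSOP\text{ with respect to }\Sigma,$$
whose verification uses that every metric space is regular, so that Lemma~\ref{lemma} is available. For ``$\Rightarrow$'' I fix $\alpha>0$ and $P\in\mathrm{CLO}(X)$; since $\int(P)\neq\emptyset$, Lemma~\ref{lemma} gives a non-empty open $U\subseteq\int(P)$ with $\diam(f(\cl(U)))<\alpha$, and then $Q:=\cl(U)\in\mathrm{CLO}(X)$ satisfies $Q\subseteq P$ and $\osc(f\res Q,x)\le\diam(f(Q))<\alpha$ for all $x\in Q$. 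For ``$\Leftarrow$'', given a non-empty open $W$ and $\alpha>0$, I apply the $\HSOP$ to $P:=\cl(B)$ for a ball $B$ with $\cl(B)\subseteq W$, obtaining $Q\subseteq P$ in $\mathrm{CLO}(X)$ with small oscillation at each of its points; since $\int(Q)\subseteq\int(P)\subseteq W$, picking $x\in\int(Q)$ together with a witnessing radius produces an open $U\subseteq W$ with $\diam(f(U))<\alpha$.

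Granting this equivalence, the branch ``$f$ not cliquish $\Rightarrow$ Player~I wins'' follows at once: then $f$ fails the $\HSOP$, that is, $\neg$(ii) holds, and the implication $\neg\text{(ii)}\Rightarrow\text{(iv)}$ is available here because, as noted in Remark~\ref{remark2}, its proof in Theorem~\ref{TW} invokes neither completeness, nor closedness, nor density of $\Sigma$.

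The main obstacle is the reverse branch ``$f$ cliquish $\Rightarrow$ Player~II wins''. One would like to quote $\text{(ii)}\Rightarrow\text{(iii)}$ of Theorem~\ref{TW}, but that implication rests on $\Sigma$ being dense, and $\mathrm{CLO}(X)$ need not be dense in the sense of Section~3: if $x$ lies on a lower-dimensional ``spike'' of some $P\in\mathrm{CLO}(X)$, then $P\cap B(x,r)$ may have empty interior and so contain no member of $\mathrm{CLO}(X)$. I therefore reprove $\text{(ii)}\Rightarrow\text{(iii)}$ by hand in this setting, following Theorem~\ref{TW} and Corollary~\ref{c3} but forcing Player~II to play only \emph{regular closed} sets, for which the shrinking step is always legal. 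After Player~I plays $P$, let Player~II answer with $P_0:=\cl(U_0)$, where $U_0\subseteq\int(P)$ is a non-empty open set chosen via Lemma~\ref{lemma} so that $\diam(f(P_0))<1$. At stage $k\ge1$, when Player~I plays $x_k\in P_{k-1}=\cl(U_{k-1})$, the set $V_k:=U_{k-1}\cap B(x_k,2^{-(k+1)})$ is non-empty and open precisely because $x_k\in\cl(U_{k-1})$; applying Lemma~\ref{lemma} to $V_k$ yields a non-empty open $U_k\subseteq V_k$ with $\diam(f(\cl(U_k)))<1/(k+1)$, and I set $P_k:=\cl(U_k)\in\mathrm{CLO}(X)$.

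It remains to confirm that this strategy wins, which goes exactly as in Theorem~\ref{TW}. By construction $P_k=\cl(U_k)\subseteq\cl(U_{k-1})=P_{k-1}$ and $P_k\subseteq B(x_k,2^{-k})$, so the $P_k$ are nested and $d(x_k,x_{k+1})<2^{-k}$; hence $\la x_k\ra$ is Cauchy and converges, by completeness, to some $x$. As each $P_k$ is closed and contains every $x_i$ with $i>k$, we get $x\in\bigcap_k P_k$, whence $|f(x_i)-f(x)|\le\diam(f(P_k))<1/(k+1)$ for $i>k$, and therefore $f(x_i)\to f(x)$. Since for an arbitrary $f$ exactly one of ``cliquish'' and ``not cliquish'' occurs, one of the players always has a winning strategy, so $G_1(\mathrm{CLO}(X),f)$ is determined, as claimed.
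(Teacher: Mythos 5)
Your proof is correct, and it does more than reproduce the paper's argument: it repairs a real gap in it. The paper's own proof is two sentences: Lemma~\ref{lemma} yields that $f$ is cliquish if and only if it has the $\HSOP$ with respect to $\mathrm{CLO}(X)$ (your detailed verification of this equivalence matches what the paper leaves implicit), and then Theorem~\ref{TW} is invoked wholesale. But Theorem~\ref{TW} assumes $\Sigma$ is dense in the sense of Section~3, and, exactly as you observe, $\mathrm{CLO}(X)$ need not be: for $X=\R$, $P=[0,1]\cup\{2\}$ and $x=2$, the set $P\cap B(x,1/2)=\{2\}$ contains no closed set with non-empty interior, so the implication (ii)$\Rightarrow$(iii) cannot be quoted verbatim for $\Sigma=\mathrm{CLO}(X)$. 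Your two-pronged repair is exactly what is needed. For the Player~I branch you correctly note that the proof of $\neg$(ii)$\Rightarrow$(iv) uses neither completeness, nor closedness, nor density (Remark~\ref{remark2} literally mentions only the first two, but density is likewise unused there). For the Player~II branch you rerun the construction from (ii)$\Rightarrow$(iii) with Player~II confined to sets of the form $\cl(U)$ with $U$ open and non-empty, for which the shrinking step is always legal: $U_{k-1}\cap B(x_k,2^{-(k+1)})\neq\emptyset$ precisely because $x_k\in\cl(U_{k-1})$, and Lemma~\ref{lemma} then supplies the next small-oscillation regular closed set. The estimates (nestedness, $d(x_k,x_{k+1})<2^{-k}$, hence Cauchy; $x\in\bigcap_k P_k$ by closedness; $|f(x_i)-f(x)|\le\diam(f(P_k))<1/(k+1)$ for $i>k$) all check out. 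A slightly more structural packaging of the same fix: the family $\Sigma'$ of closures of non-empty open sets is dense and consists of closed sets, cliquishness gives the $\HSOP$ with respect to $\Sigma'$, so Theorem~\ref{TW} applies to $\Sigma'$, and a winning strategy for Player~II in $G_1(\Sigma',f)$ transfers to $G_1(\mathrm{CLO}(X),f)$ by answering an opening move $P\in\mathrm{CLO}(X)$ as if it were $\cl(\int(P))\in\Sigma'$; this is your argument stated as a reduction rather than redone by hand.
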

	\begin{proof}
	By Lemma \ref{lemma},  $f\colon X\to\R$ is cliquish if and only if it has the HSOP with respect to $\mathrm{CLO}(X)$. Now the assertion holds by Theorem  \ref{TW}.
	\end{proof}

	Observe that a function $f\colon X\to\R$ has the $\HCRP$  with respect to $\mathrm{CLO}(X)$ if and only if the set $\int(C(f))$ is dense in $X$. Therefore, unlike previous examples, in this case the $\HSOP$ and the $\HCRP$ are not equivalent.
%\end{remark}
\begin{example}
	Let $X=\R$. List all rationals in the sequence  $\{ q_n\colon n\in\N\}$. Let $f\colon\R\to\R$ be defined by $f(x)=\frac{1}{n}$ for $x=q_n$, $n\in\N$, and $f(x)=0$ for $x\in\R\setminus\Q$. Then the set $C(f)=\R\setminus\Q$ is dense in $\R$, hence $f$ is cliquish but $\int(C(f))=\emptyset$, so $f$ has not the $\HCRP$ with respect to $\mathrm{CLO}(\R)$. 
\end{example}

\section{Countably continuous functions and SZ-functions}
Assume that $X$ is a separable metric space.
A function $f\colon X\to\R$ is called:
\begin{itemize}
	\item {\it $\kappa$-continuous} if there exists a  partition $\{ X_\alpha\colon \alpha<\kappa\}$ such that $f\res X_\alpha$ is continuous for any $\alpha<\kappa$;
	\item
	{\it $<\kappa$-continuous} if it is $\lambda$-continuous for any $\lambda<\kappa$;
	\item
	{\it Sierpi\'{n}ski-Zygmund}  (in short, SZ) if $f\res A$ is continuous for no $A\subseteq X$ with $\card(A)=\Co$ (the cardinality of $\R$);
	\item
	{\it somewhere Sierpi\'{n}ski-Zygmund} if there is $X_0\subseteq X$ such that $\card(X_0)=\Co$ and $f\res X_0$ is SZ on $X_0$.
	\end{itemize}
 In particular, $\omega$-continuous functions are called {\it countably continuous}. 
Notice that the notions of $\omega$-continuity and $<\Co$-continuity coincide under CH.
The following fact characterizes the $<\Co$-continuous functions
under the assumption that $\Co$ is a regular cardinal. This result generalizes a characterization of countable continuity formulated (for $X=\R$, under CH, and without a
proof) by Darji in \cite[Theorem 10]{Darji}. %See also \cite[Proposition 2.3]{CN}.
\begin{fact}\label{cc}
	\mbox{\cite[Proposition 2.3]{CN}}
	Assume that $\Co$ is a regular cardinal and $X$ is separable and metric. Then for any $f\colon X\to\R$ the following conditions are equivalent:
	\begin{enumerate}
		\item $f$ is $<\Co$-continuous;
		\item for every $U\subseteq X$ with $\card(U)=\Co$ there exists a set $V\subseteq U$ with $\card(V)=\Co$ such that $f\res V$ is continuous, i.e. $f$ has the $\HCRP$ with respect to the family $\Sigma=[X]^\Co$;
		\item
		$f$ is nowhere SZ, i.e. it is not somewhere SZ.
	\end{enumerate}
\end{fact}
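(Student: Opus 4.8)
The plan is to prove the cycle by establishing the trivial equivalence \emph{(2)}$\Leftrightarrow$\emph{(3)}, the easy implication \emph{(1)}$\Rightarrow$\emph{(2)}, and finally the substantial implication \emph{(3)}$\Rightarrow$\emph{(1)} in contrapositive form. I would first dispose of \emph{(2)}$\Leftrightarrow$\emph{(3)}, which is purely a matter of unwinding definitions: taking $X_0=U$, the assertion that $f\res X_0$ fails to be SZ on $X_0$ is word-for-word the assertion that $U$ contains a subset $V$ with $\card(V)=\Co$ and $f\res V$ continuous. Hence ``$f$ is nowhere SZ'' and ``$f$ has the $\HCRP$ with respect to $[X]^\Co$'' are literally the same statement.

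For \emph{(1)}$\Rightarrow$\emph{(2)} I would use only that $f$ is $\lambda$-continuous for a single $\lambda<\Co$, say $X=\bigcup_{\alpha<\lambda}X_\alpha$ with each $f\res X_\alpha$ continuous. Given $U$ with $\card(U)=\Co$, write $U=\bigcup_{\alpha<\lambda}(U\cap X_\alpha)$; since $\Co$ is regular, a union of fewer than $\Co$ sets each of size $<\Co$ has size $<\Co$, so some $V:=U\cap X_\alpha$ has $\card(V)=\Co$, and $f\res V$ is continuous as a restriction of $f\res X_\alpha$. This is the only place regularity is used; for singular $\Co$ the implication can fail, so the hypothesis is genuinely needed here.

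The substance is \emph{(3)}$\Rightarrow$\emph{(1)}, which I would prove by assuming $f$ is not $<\Co$-continuous (i.e.\ $X$ is \emph{not} the union of fewer than $\Co$ sets on each of which $f$ is continuous) and producing a witness $X_0$ showing $f$ is somewhere SZ. The key preliminary reduction is to replace the class of all sets carrying a continuous restriction by a family of size $\Co$. By Lavrentiev's extension theorem, every continuous $f\res A$ extends to a continuous $g$ on a $G_\delta$ set $G\supseteq A$, so $A\subseteq E:=\{x\in G: g(x)=f(x)\}$ while $f\res E$ remains continuous. Since $X$ is separable metric there are at most $\Co$ pairs $(G,g)$, so the resulting sets list as $\{E_\xi:\xi<\Co\}$; each $f\res E_\xi$ is continuous, every set carrying a continuous restriction sits inside some $E_\xi$, and therefore $f$ is $<\Co$-continuous exactly when $X=\bigcup_{\xi\in I}E_\xi$ for some $I$ with $\card(I)<\Co$.

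Under $\neg$\emph{(1)} no subfamily of size $<\Co$ covers $X$, and I would diagonalize along $\Co$: recursively choose $x_\alpha\in X\setminus\bigl(\bigcup_{\xi<\alpha}E_\xi\cup\{x_\beta:\beta<\alpha\}\bigr)$. This is always possible, since otherwise $X$ would be covered by the fewer-than-$\Co$ sets $E_\xi$ $(\xi<\alpha)$ together with, for each $\beta<\alpha$, one $E_\xi$ absorbing the singleton $\{x_\beta\}$ (take $g$ the constant $f(x_\beta)$), contradicting $\neg$\emph{(1)}. Putting $X_0:=\{x_\alpha:\alpha<\Co\}$ gives $\card(X_0)=\Co$, and since $x_\alpha\notin E_\xi$ whenever $\alpha>\xi$, we get $X_0\cap E_\xi\subseteq\{x_\alpha:\alpha\le\xi\}$, so $\card(X_0\cap E_\xi)<\Co$ for every $\xi$. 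As any $A\subseteq X_0$ with $f\res A$ continuous lies in some $E_\xi$ and hence has size $<\Co$, the restriction $f\res X_0$ is SZ on $X_0$, i.e.\ $f$ is somewhere SZ. The main obstacle is precisely this diagonalization: getting the Lavrentiev reduction to a $\Co$-sized family right, and verifying that the recursion survives all $\Co$ stages — the constant-function trick for the singletons is exactly what keeps $\neg$\emph{(1)} applicable at each step.
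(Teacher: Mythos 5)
The paper gives no proof of this statement at all: it is imported verbatim as a Fact with the citation \cite[Proposition 2.3]{CN}, so there is no internal argument to compare yours against; your proposal has to stand on its own, and it does. The equivalence (2)$\Leftrightarrow$(3) is, as you say, a pure unwinding of the definitions of SZ and somewhere-SZ; (1)$\Rightarrow$(2) is the correct regularity argument (a union of fewer than $\Co$ sets of size $<\Co$ has size $<\Co$); and your (3)$\Rightarrow$(1) is the classical Sierpi\'nski--Zygmund-style construction: use the extension theorem to produce a family $\{E_\xi\colon\xi<\Co\}$ of at most $\Co$ sets, each carrying a continuous restriction of $f$ and cofinal under inclusion among all sets on which $f$ is continuous, reformulate $<\Co$-continuity as coverability of $X$ by $<\Co$ many $E_\xi$'s (the disjointification needed for the converse direction is trivial and you may leave it implicit), and then diagonalize transfinitely. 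The constant-function trick guaranteeing that the recursion survives every stage $\alpha<\Co$ is exactly what is needed, and the bookkeeping $X_0\cap E_\xi\subseteq\{x_\alpha\colon\alpha\le\xi\}$ correctly yields that $f\res X_0$ is SZ on $X_0$. Two small remarks: the extension theorem you invoke (continuous maps of subsets of a metric space into a Polish space extend continuously over a $G_\delta$ superset) is usually attributed to Kuratowski rather than Lavrentiev, whose theorem is the two-sided homeomorphism version; and your count of pairs $(G,g)$ tacitly uses that a separable metric space has at most $\Co$ many $G_\delta$ subsets and at most $\Co$ continuous real-valued functions on each of them --- both standard, but worth stating. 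Your observation that regularity of $\Co$ enters only in (1)$\Rightarrow$(2) is also accurate: the diagonalization in (3)$\Rightarrow$(1) only needs $|\alpha|+|\alpha|<\Co$ for $\alpha<\Co$.
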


\begin{proposition}\label{p}
	Assume that $X$ is a metric space with  $|X|\ge\Co$. Then every function $f\in\R^X$ has the $\HSOP$ with respect to the family $\Sigma=[X]^{\Co}$. This means that if $X$ is additionally separable, then properties $\HCRP$ and $\HSOP$ with respect to $\Sigma$ are not equivalent.
\end{proposition}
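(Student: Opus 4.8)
The plan is a direct verification of the $\HSOP$ from its definition, the single essential ingredient being that $\Co$ has uncountable cofinality. Fix $f\in\R^X$, a number $\alpha>0$, and a set $P\in[X]^\Co$; I must exhibit $Q\subseteq P$ with $|Q|=\Co$ and $\osc(f\res Q,x)<\alpha$ for all $x\in Q$. First I would cover $\R$ by the countably many half-open intervals $I_n:=[n\alpha/2,(n+1)\alpha/2)$, $n\in\Z$, each of diameter $\alpha/2<\alpha$, and set $P_n:=P\cap f^{-1}(I_n)$, so that $P=\bigcup_{n\in\Z}P_n$. Since $|P|=\Co$ and, by K\"onig's theorem, $\cf(\Co)>\omega$, a countable union of sets of cardinality $<\Co$ again has cardinality $<\Co$; hence some $P_n$ has cardinality $\Co$. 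Put $Q:=P_n$ for such an $n$. Then $Q\in[X]^\Co$, $Q\subseteq P$, and $f(Q)\subseteq I_n$, so $\diam\bigl(f(Q\cap B(x,\ve))\bigr)\le\diam(f(Q))\le\alpha/2$ for every $x\in Q$ and every $\ve>0$; letting $\ve\to0^+$ gives $\osc(f\res Q,x)\le\alpha/2<\alpha$ for all $x\in Q$. As $\alpha$ and $P$ were arbitrary, $f$ has the $\HSOP$ with respect to $[X]^\Co$. This is the entire content of the first assertion: no completeness or closedness is invoked, and the only non-routine fact used is $\cf(\Co)>\omega$.

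For the final sentence, assume in addition that $X$ is separable, so that $|X|\le\Co$ and therefore $|X|=\Co$, whence $X\in[X]^\Co$. Every $f\in\R^X$ has the $\HSOP$ by the above. On the other hand, a Sierpi\'nski--Zygmund function $f\colon X\to\R$ exists, by the classical transfinite construction carried out in ZFC on a separable metric space of cardinality $\Co$ (using that there are at most $\Co$ continuous partial functions with $G_\delta$ domain, and that each value $f(x)$ need only avoid fewer than $\Co$ forbidden reals). By definition $f\res A$ is continuous for no $A\subseteq X$ with $|A|=\Co$. Taking $P:=X$, there is then no $Q\subseteq P$ in $[X]^\Co$ with $f\res Q$ continuous, so $f$ fails the $\HCRP$ with respect to $[X]^\Co$. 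Thus the $\HSOP$ and the $\HCRP$ with respect to $\Sigma=[X]^\Co$ are not equivalent.

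I expect no real obstacle in the first part beyond recalling $\cf(\Co)>\omega$; the step most worth stating carefully is the existence of the $\HCRP$-failing function used for non-equivalence. I would emphasize that the failure of the $\HCRP$ follows at once from the defining property of an SZ function, so this half does not require the regularity of $\Co$ assumed in Fact \ref{cc}; equivalently, under that regularity hypothesis Fact \ref{cc} lets one rephrase the example as a function that is somewhere SZ and hence not $<\Co$-continuous.
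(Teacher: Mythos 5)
Your proof is correct and takes essentially the same route as the paper: for the first assertion you partition the range into countably many intervals of diameter less than $\alpha$ and use $\on{cf}(\Co)>\omega$ to extract a preimage piece of cardinality $\Co$, exactly the paper's argument (the paper leaves the appeal to K\"onig's theorem implicit, and your write-up is in fact cleaner, avoiding the paper's minor index slips). For the non-equivalence, both you and the paper use a Sierpi\'nski--Zygmund function on the separable space $X$ (of cardinality $\Co$); the paper cites the literature for its existence where you sketch the standard ZFC transfinite construction, but the content is the same.
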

\begin{proof}
	Fix $f\in\R^X$, $\alpha>0$, and $A\in [X]^{\ge \Co}$. We may assume that $|A|=\Co$. Let $n\in\N$ be such that $\frac{1}{n}<\frac{\alpha}{2}$ and, for each $k\in\Z$ let $J_{k,n}=[\frac{k}{n},\frac{k+1}{n})$ and $A_{k,n}=f^{-1}[J_{k,n}]$. Then $A=\bigcup_{k\in\Z}A_{k,n}$, so $|A_{k,n}|=\Co$ for some $k$. Fix any $B\subseteq A_{k,n}$. Then $|f(x)-f(x')|<\frac{1}{k}<\frac{\alpha}{2}$ for any $x,x'\in B$, hence $\osc(f\res B,x)<\alpha$ for each $x\in B$.
	
	The second part of the assertion follows from the fact that if $X$ is separable then there exists a SZ-function $f\in \R^X$, cf. \cite{CN}, and any such an $f$ has the $\HSOP$ but it has not the $\HCRP$ with respect to $\Sigma$.
	\end{proof}

\begin{corollary}
	Assume that $\Co$ is a regular cardinal, $X$ is an uncountable Polish space and $\Sigma=[X]^\Co$.
	Then for any $f\in \R^X$, Player II has a winning strategy in the game $G_1(\Sigma,f)$. 
\end{corollary}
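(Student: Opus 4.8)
The plan is to build a winning strategy for Player~II directly, rather than by invoking Theorem~\ref{TW}. Although Proposition~\ref{p} tells us that $f$ has the $\HSOP$ with respect to $\Sigma=[X]^\Co$, we cannot simply feed this into Theorem~\ref{TW}: the family $[X]^\Co$ is neither dense nor composed of closed sets (a set of size $\Co$ may meet a small ball in only one point), so the mechanism of that proof, which forces $\la x_n\ra$ to be Cauchy and pins its limit down inside $\bigcap_k P_k$, is unavailable. Instead I would have Player~II fix, already in response to Player~I's opening move $P\in[X]^\Co$, a single target point $x\in P$ together with its value $f(x)$, and then play nested ``boxes'' shrinking to $(x,f(x))$; the entire difficulty is concentrated in producing the right $x$.

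The key step is to pass to the graph. Set $G:=\{(y,f(y))\colon y\in P\}\subseteq X\times\R$; since $f$ is a function and $|P|=\Co$ we have $|G|=\Co$, and $X\times\R$ is again a Polish space, hence has a countable base $\mc B$. I claim $G$ has a $\Co$-condensation point, i.e. a point every neighbourhood of which meets $G$ in $\Co$ many points. Let
$$O:=\bigcup\{U\in\mc B\colon |U\cap G|<\Co\}.$$
Then $O\cap G$ is a union of at most countably many sets each of size $<\Co$; since $\Co$ is regular and uncountable, $\on{cf}(\Co)>\omega$, so such a union still has size $<\Co$. As $|G|=\Co$, we may pick $(x,f(x))\in G\setminus O$, and by the definition of $O$ every basic neighbourhood of $(x,f(x))$ meets $G$ in $\Co$ many points, hence so does every open neighbourhood. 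This $x\in P$ is the target point. This is the step I expect to be the crux, and it is exactly where the regularity of $\Co$ enters.

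It then remains to describe the strategy and check the win. Player~II ignores Player~I's points and plays, at his $n$th move,
$$P_n:=\Big\{y\in P\colon d(y,x)<\tfrac{1}{2^{n+1}}\ \text{and}\ |f(y)-f(x)|<\tfrac{1}{2^{n+1}}\Big\}.$$
By the condensation property $|P_n|=\Co$, so $P_n\in\Sigma$; clearly $P_n\subseteq P$ and $x\in P_n$, so every move is legal. Whatever $x_{n+1}\in P_n$ Player~I then chooses, we have $d(x_{n+1},x)<2^{-(n+1)}$ and $|f(x_{n+1})-f(x)|<2^{-(n+1)}$, whence $x_n\to x$ and $f(x_n)\to f(x)$. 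Note that here the limit is named in advance, so neither completeness nor closedness of the $P_n$ is needed to identify it. Thus Player~II wins every run, so the described strategy is winning, which is the assertion.
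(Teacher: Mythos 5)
Your proof is correct, and it takes a genuinely different route from the paper's. The paper also argues directly (and you correctly identify why Theorem~\ref{TW} cannot be invoked: $[X]^\Co$ is not dense and its members need not be closed), but its strategy is organized around the HSOP-style machinery: after Player~I plays $P$, for each $n$ it takes a \emph{maximal} pairwise disjoint family $\mathcal{A}_n$ of subsets $A\subseteq P$ that are $c$-dense-in-themselves and satisfy $|f(x)-f(x')|<\frac{1}{n}$ on $A$; regularity of $\Co$ makes the leftover set $B:=\bigcup_n\bigl(P\setminus\bigcup\mathcal{A}_n\bigr)$ small, one then picks $x_0\in P\setminus B$, which for each $k$ lies in some $A_k\in\mathcal{A}_k$, and Player~II answers with $P_k:=A_k\cap B\left(x_0,\frac{1}{k}\right)$, which has size $\Co$ because $A_k$ is $c$-dense-in-itself. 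Your graph trick replaces this per-$n$ bookkeeping with a single application of the condensation-point argument in the product space $X\times\R$: the point $(x,f(x))\in G\setminus O$ controls closeness in $X$ and closeness of $f$-values simultaneously, so the families $\mathcal{A}_n$ and the maximality argument disappear. The combinatorial core is the same in both proofs --- second countability plus regularity of $\Co$ (a countable union of sets of size $<\Co$ has size $<\Co$) produces a $\Co$-condensation point --- but your version is shorter, avoids maximal families, and in fact spells out in full the ``leftover set is small'' step, which the paper only states as an observation (it needs exactly your argument). What the paper's version buys in exchange is that its construction stays aligned with the $c$-dense-in-itself sets and the HSOP viewpoint used throughout the rest of the article. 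Both proofs, as you note, use only separability of $X$, never completeness, since the limit point is named in advance.
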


\begin{proof}
At the beginning of the play, Player I chooses a set $P\in [X]^\Co$. For every $n\in\N$ let $\mathcal{A}_n$ denote a maximal family of pairwise disjoint  subsets $A\subset P$ such that $|A\cap B(x,\varepsilon)|=\Co$ for every $\varepsilon>0$ and $x\in A$ (i.e., $A$ is $c$-dense-in-itself) and  $|f(x)-f(x')|<\frac{1}{n}$ for $x,x'\in A$. 
Observe that $B_n:=P\setminus\bigcup\mathcal{A}_n$ is of size less than $\Co$. Let $B:=\bigcup_nB_n$. 
Since $\Co$ is a regular cardinal, 
$A:=P\setminus B$ has cardinality $\Co$, so there exists $x_0\in A$ such that $|A\cap B(x_0,\varepsilon)|=\Co$ for every $\varepsilon>0$. For every $k\in\N$ there exists $A_k\in\mathcal{A}_k$ with $x_0\in A_k$. Now, Player~II chooses in his $k$th move the set $P_{k}:=A_k\cap B(x_0,\frac{1}{k})$. Then, regardless of the choice of  points $x_k \in P_{k-1}$  by Player~I, we have $\lim_{n}x_n=x_0$ and  $|f(x)-f(x_0)|<\frac{1}{k}$, hence $\lim_{n}f(x_n)=f(x_0)$. 
To conclude, the above is a winning strategy for Player~II.
\end{proof}

%%%%%%%%%%%%%%%%%%%%%%%%%%%%%%%%%%%%%%%%%%%%%%%%%%%%%%%%%%%%%%%%%%%%%%%%%%%%%%%%%%%%%%%%%%%%%%%%%%%%%%%%%%%%%%%%%
% Bibliography
%%%%%%%%%%%%%%%%%%%%%%%%%%%%%%%%%%%%%%%%%%%%%%%%%%%%%%%%%%%%%%%%%%%%%%%%%%%%%%%%%%%%%%%%%%%%%%%%%%%%%%%%%%%%%%%%%

\bibliographystyle{abbrv}
\bibliography{games}

\end{document}